\documentclass[10pt]{article}%
\usepackage{amsmath, amsthm}
\usepackage{graphicx}
\usepackage{amsfonts}
\usepackage{amssymb}
\usepackage{times}

\newtheorem{theorem}{Theorem}

\newtheorem{corollary}[theorem]{Corollary}

\newtheorem{definition}[theorem]{Definition}

\newtheorem{lemma}[theorem]{Lemma}

\pagestyle{plain} \textwidth=6.7truein \textheight=9.4truein
\voffset-.9in\hoffset-.94in

\begin{document}

\begin{center}
\vspace{0.4in}

{\LARGE \textbf{Strange Attractors for Asymptotically Zero Maps}}

\vspace{0.3in}

\smallskip

\textsf{Yogesh Joshi}

\textsf{Department of Mathematics and Computer Science}

\textsf{Kingsborough Community College}

\textsf{Brooklyn, NY 11235-2398}

\textsf{yogesh.joshi@kbcc.cuny.edu}

$\ast\;\ast\;\ast$

\smallskip

\textsf{Denis Blackmore}

\textsf{Department of Mathematical Sciences and}

\textsf{Center for Applied Mathematics and Statistics}

\textsf{New Jersey Institute of Technology}

\textsf{Newark, NJ 07102-1982}

\textsf{deblac@m.njit.edu}
\end{center}

\vspace{0.2in}

\noindent\textbf{ABSTRACT:} A discrete dynamical system in Euclidean $m$-space
generated by the iterates of an asymptotically zero map $f$ , satisfying
$\left\vert f(x)\right\vert \rightarrow0$ as $\left\vert x\right\vert
\rightarrow\infty$, must have a compact global attracting set $A $. The
question of what additional hypotheses are sufficient to guarantee that $A$
has a minimal (invariant) subset $\mathfrak{A}$ that is a chaotic strange
attractor is answered in detail for a few types of asymptotically zero maps.
These special cases happen to have many applications (especially as
mathematical models for a variety of processes in ecological and population
dynamics), some of which are presented as examples and analyzed in
considerable detail.

\bigskip

\noindent\textbf{Keywords:} Asymptotically zero maps, Strange attractors,
Chaos, Lyapunov exponents, Fractal dimension, Pioneer and climax species

\medskip

\noindent\textbf{AMS Subject Classification 2010:} 37D45; 37E99; 92D25; 92D40

\section{Introduction}

The identification and characterization of (chaotic) strange attractors in
discrete dynamical systems is important for both theory and applications
(see,e.g. \cite{Arr,BBK,Caz,CA,Gh,HKLN,JB2,Mar,Rou,Ru,SR1,SR2,Thun,UW,Wig} ,
but this is often very difficult to accomplish with the requisite mathematical
rigor. Compelling evidence of the difficulty in proving the existence of
strange attractors, even for relatively simple nonlinear maps, is provided by
the pioneering work of Misiurewicz \cite{Mis}\ on the Lozi map and that of
Benedicks \& Carleson \cite{BC} on the H\'{e}non map, which in both cases -
and especially the latter - required exceptionally lengthy, delicate and
inspired analysis. In recent years, the basic ideas behind the proofs of these
two landmark results in strange attractor theory have recently been extended
and generalized in terms of a theory of \emph{rank one maps} in an
extraordinary series of papers by Wang \& Young \cite{WY1,WY2,WY3}, the
content of which gives striking confirmation of the exceptional complexity
underlying characterizations of strange attractors for broad classes of
discrete dynamical systems. Not only are the foundational results of rank one
theory hard to prove, they also tend to be rather difficult to apply, as for
example in Ott \& Stenslund \cite{OS}, which is closely related to results of
Zaslavsky \cite{Zas} and Wang \& Young \cite{WY2a}. In light of this rather
daunting rigorous strange attractor landscape, it is clear that there is a
need for theory that is simpler to develop and apply for special classes of
discrete dynamical systems of significant theoretical and applied interest. We
make a start in this direction for some types among the class of discrete
dynamical systems generated by what we call \emph{asymptotically zero maps},
defined as follows: A map
\[
f:\mathbb{R}^{m}\rightarrow\mathbb{R}^{m}%
\]
is \emph{asymptotically zero} (\emph{AZ}) if $\left\vert f(x)\right\vert
\rightarrow0$ as $\left\vert x\right\vert \rightarrow\infty$. We note that
this includes \emph{exponentially decaying maps} (\emph{cf}. \cite{CB})
satisfying the property that $\left\vert f(x)\right\vert \leq Me^{-\left\vert
x\right\vert }$ for some $M>0$ and all $x\in\mathbb{R}^{m}$, and it was our
earlier work on these kinds of maps \cite{JB1,JB2} that inspired our present
efforts - in fact, we were even able to formulate versions of the theorems
that we prove here.

Our organization of the rest of this paper is as follows: In Section 2 we
provide precise definitions of the concepts employed in the sequel and also
prove some elementary dynamical properties of AZ maps. Then, in Section 3, we
prove the existence of what we call \emph{radial strange attractors} for
smooth AZ maps possessing certain additional special properties. This is
followed in Section 4 by a proof of the existence of another special kind of
strange attractor, which we refer to as being of \emph{multihorseshoe} (or
\emph{trellis}). In Section 5 we apply out theorems to several examples
arising from various mathematical models and illustrate the strange attractors
via simulation. We conclude our investigation in Chapter 6 by summarizing our
findings, attempting to assess their impact and identifying some open problems
they suggest. In addition, we also indicate some of our plans for related
future research on strange attractors.

\section{Preliminaries}

Since there does not seem to universally accepted definitions for some of the
key concepts that concern us, we shall describe exactly the ones we are using.
For more standard definitions, we refer the reader to
\cite{Arr,Dev,Gh,HKLN,Rob,Wig}. First, we shall deal with chaos, which has
several more or less equivalent characterizations.

\begin{definition}
\label{defCh} definition. Let $f:X\rightarrow X$ be a continuous self map of
the metric space $X$. The map $f$, or the (semi) discrete dynamical system
comprised of the iterates $f^{n}$ as $n$ ranges over the natural numbers
$\mathbb{N}$, is \textbf{chaotic} if (1) $f$ is topologically transitive (or
mixing), meaning that for every pair $U$, $V$ of open sets of $X$ there exists
a $k\in\mathbb{N}$ such that $f^{k}(U)\cap V\neq\varnothing$ and (2) the set
of periodic points of $f$, denoted as $Per(f)$, is dense in $X$.
\end{definition}

\noindent This is essentially the prescription of Devaney \cite{Dev} sans the
property of sensitive dependence of iterates - common to most definitions of
chaos. However, as shown by Banks \emph{et al}. \cite{BBCDS}, sensitive
dependence is implied by (1) and (2), which renders it redundant for our definition.

The definitions of an attractor and a strange attractor are also subject to
various interpretations, so let us be specific.

\begin{definition}
\label{defAS} Let $f:X\rightarrow X$ be a continuous self map of the metric
space $X$. A subset $A$ of $X$ is an \textbf{attracting set} of the map $f$
(or dynamical system generated by $f$) if it satisfies the following properties:

\begin{itemize}
\item[]
\begin{itemize}
\item[(AS1)] It is a nonempty, closed and (positively) invariant, i.e.
$f(A)\subset A$.

\item[(AS2)] There exists an open set $U$ containing $A$ such for every $x\in
U$, $d\left(  f^{n}(x),A\right)  \rightarrow0$ as $n\rightarrow\infty$.
\end{itemize}
\end{itemize}
\end{definition}

\noindent Attracting sets with more complex structure and dynamics may be
defined as follows:

\begin{definition}
\label{defCAS} Let $f:X\rightarrow X$ be a continuous self map of the metric
space $X$. A subset $A$ of $X$ is a \textbf{\ chaotic attracting set} of the
map $f$ $($or dynamical system generated by $f)$ if it satisfies the following properties:

\begin{itemize}
\item[]
\begin{itemize}
\item[(CAS1)] It is an attracting set for $f$.

\item[(CAS2)] There is a nonempty closed invariant subset $A_{\ast}$ of $A$
such that the restriction $f_{|A_{\ast}}:A_{\ast}\rightarrow A_{\ast}$ is chaotic.
\end{itemize}
\end{itemize}
\end{definition}

\begin{definition}
\label{defSCAS} Let $f:X\rightarrow X$ be a continuous self map of a subset of
$\mathbb{R}^{m}$that is $C^{1}$ except possibly on finitely many $C^{1}$
submanifolds of $\mathbb{R}^{m}$ of dimensions less than $m$ $($a property we
denote for convenience as $C^{1^{\ast}})$. A subset $A$ of $X$ is a
\textbf{semichaotic attracting set} of the map $f$ $($or dynamical system
generated by $f)$ if it satisfies the following properties:

\begin{itemize}
\item[]
\begin{itemize}
\item[(SCAS1)] It is a compact attracting set for $f$.

\item[(SCAS2)] The restriction $f_{|A}:A\rightarrow A$ is continuous and
$C^{1^{\ast}}$ and \textbf{semichaotic} in the sense that there is a nonempty,
invariant subset $A_{\ast}$ of $A$ on which it is sensitively dependent on
initial conditions, i.e.
\[
n^{-1}\log\left\Vert (f^{n})^{\prime}(x)\right\Vert =n^{-1}{\displaystyle\sum
\nolimits_{k=1}^{n}}\log\left\Vert f^{\prime}\left(  f^{k-1}(x)\right)
\right\Vert \geq\ell>0\;\forall(x,n)\in A_{\ast}\times\mathbb{N},
\]
where $\left\Vert \cdot\right\Vert $denotes the standard norm of a linear map.\
\end{itemize}
\end{itemize}
\end{definition}

\noindent We note that it follows from the multiplicative ergodic theorem of
Oseledec (see, \emph{e.g.}\cite{Rob,Wig}) that the expression in (SCAS2) has a
limit almost everywhere on $\mathbb{R}^{m}$, which represents the maximum
Lyapunov exponent of the orbit initiating at $x$.

For an attractor, we add a bit more including a minimality requirement.

\begin{definition}
\label{defA} Let $f:X\rightarrow X$ be a continuous self map of the metric
space $X$. A subset $\mathcal{A}$ of $X$ is an \textbf{attractor} of the map
$f$ $($or dynamical system generated by $f)$ if it satisfies the following properties:

\begin{itemize}
\item[]
\begin{itemize}
\item[(A1)] It is a nonempty, closed and (positively) invariant, i.e.
$f(\mathcal{A})\subset\mathcal{A}$.

\item[(A2)] There exists an open set $U$ containing $\mathcal{A}$ such for
every $x\in U$, $d\left(  f^{n}(x),\mathcal{A}\right)  \rightarrow0$ as
$n\rightarrow\infty$.

\item[(A3)] It is minimal with respect to (i) and (ii), i.e. $\mathcal{A}$ has
no proper subset with these properties.
\end{itemize}
\end{itemize}
\end{definition}

\noindent A strange attractor is an attractor with additional properties
including, for our purposes, chaotic dynamics.

\begin{definition}
\label{defCSA} Let $f:X\rightarrow X$ be a continuous self map of the metric
space $X$. A subset $\mathfrak{A}$ of $X$ is a \textbf{chaotic strange
attractor} of the map $f$ $($or dynamical system generated by $f)$ if it
satisfies the following properties:

\begin{itemize}
\item[]
\begin{itemize}
\item[(CSA1)] It is an attractor.

\item[(CSA2)] The set is fractal, i.e. it has a noninteger fractal (Hausdorff) dimension.

\item[(CSA3)] The restriction $f|_{\mathfrak{A}}$ is chaotic.
\end{itemize}
\end{itemize}
\end{definition}

\noindent The last property (CSA3) is not always included in definitions of
strange attractors, and there are examples (such as in Grebogi \emph{et al}.
\cite{GOPY}) in which (CSA1) and (CSA2) are satisfied, but $f|_{\mathfrak{A}}
$ is regular (nonchaotic).

For our purposes, it is convenient to define a weaker form of chaotic strange attractor.

\begin{definition}
\label{defSCSA} Let $f:X\rightarrow X$ be a continuous self map of a subset of
$\mathbb{R}^{m}$. A subset $\mathfrak{A}$ of $X$ is a \textbf{semichaotic
strange attractor} of the map $f$ (or dynamical system generated by $f$) if it
satisfies the following properties:

\begin{itemize}
\item[]
\begin{itemize}
\item[(SCSA1)] It is an attractor.

\item[(SCSA2)] The set is fractal, i.e. it has a noninteger fractal
$($Hausdorff$)$ dimension.

\item[(SCSA3)] The restriction $f|_{\mathfrak{A}}$ is $C^{1^{\ast}}$ and semichaotic.
\end{itemize}
\end{itemize}
\end{definition}

\subsection{Basic dynamical properties of AZ maps}

Here we consider a couple of useful dynamical properties of continuous AZ maps
that are easy to prove.

\begin{lemma}
\label{lem2.1.1} If $f:\mathbb{R}^{m}\rightarrow\mathbb{R}^{m}$ is a
continuous AZ map, $f$ and all of its iterates $f^{n}$, $n>1$, have fixed
points in the compact ball $B_{R}(0):=\{x\in\mathbb{R}^{m}:\left\vert
x\right\vert \leq R\}$ for $R>0$ sufficiently large.
\end{lemma}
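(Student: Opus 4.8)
The plan is to exploit the fact that an AZ map eventually pushes all large vectors to near the origin, so one should look for a large invariant ball and then apply a fixed point theorem. First I would use the defining property $\left\vert f(x)\right\vert \to 0$ as $\left\vert x\right\vert \to\infty$ to produce, for any prescribed $\varepsilon>0$, a radius $\rho>0$ with $\left\vert f(x)\right\vert <\varepsilon$ whenever $\left\vert x\right\vert >\rho$. On the complementary compact ball $B_{\rho}(0)$ the continuous function $x\mapsto\left\vert f(x)\right\vert$ attains a maximum $M_{\rho}$, so setting $R:=\max\{\rho,M_{\rho},\varepsilon\}$ gives $f\bigl(B_{R}(0)\bigr)\subset B_{R}(0)$: indeed if $\left\vert x\right\vert \le\rho$ then $\left\vert f(x)\right\vert \le M_{\rho}\le R$, and if $\rho<\left\vert x\right\vert \le R$ then $\left\vert f(x)\right\vert <\varepsilon\le R$. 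Thus $B_{R}(0)$ is a compact, convex set mapped into itself by the continuous map $f$.

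Next I would invoke the Brouwer fixed point theorem: a continuous self-map of the compact convex set $B_{R}(0)\subset\mathbb{R}^{m}$ has a fixed point, which lies in $B_{R}(0)$ by construction. This handles $f$ itself. For the iterates, the key observation is that $f\bigl(B_{R}(0)\bigr)\subset B_{R}(0)$ immediately gives $f^{n}\bigl(B_{R}(0)\bigr)\subset B_{R}(0)$ for every $n>1$ by induction, and each $f^{n}$ is continuous as a composition of continuous maps; hence Brouwer applies verbatim to $f^{n}$ on the same ball $B_{R}(0)$, producing a fixed point of $f^{n}$ there. So the single radius $R$ works simultaneously for $f$ and all its iterates, which is exactly the assertion of the lemma.

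I do not anticipate a genuine obstacle here — the statement is elementary once one has the invariant-ball construction and Brouwer's theorem. The only point requiring a little care is making the choice of $R$ uniform and explicit enough that the \emph{same} ball is invariant under every iterate; this is automatic from $f(B_{R}(0))\subset B_{R}(0)$, but it is worth spelling out that no enlargement of $R$ is needed as $n$ grows. One could alternatively phrase the existence of the invariant ball as a consequence of the general fact that AZ maps have a bounded absorbing set, which is presumably established elsewhere in Section 2; either route leads to the same short argument.
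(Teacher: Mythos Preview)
Your proposal is correct and follows essentially the same route as the paper: build a compact ball that $f$ maps into itself and apply Brouwer's fixed point theorem to $f$ and then to each iterate $f^{n}$ on that same ball. The only cosmetic difference is that the paper observes in one stroke that the continuous AZ function $\left\vert f\right\vert$ attains a global maximum $M$ on $\mathbb{R}^{m}$ and takes $R\geq M$, whereas you split into the regions $\left\vert x\right\vert\leq\rho$ and $\left\vert x\right\vert>\rho$ to reach the same conclusion.
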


\begin{proof}
As $\left\vert f\right\vert $ is continuous and $f$ is an AZ map, $%
\left\vert f\right\vert $ must achieve a maximum, say $M>0$, on $\mathbb{R}%
^{m}$. Accordingly $f^{n}(B_{R}(0))\subset B_{M}(0)\subset B_{R}(0)$ for
every $R\geq M$ and $n\in \mathbb{N}$, which completes the proof in virtue
of Brouwer's fixed point theorem (see, \emph{e.g}. \cite{Hat}).
\end{proof}

\begin{lemma}
\label{lem2.1.2} Let $f$ and $M$ be as in Lemma \ref{lem2.1.1} and its proof.
Then $f$ has a compact globally attracting set defined as%
\begin{equation}
A:={\displaystyle\bigcap\nolimits_{n=1}^{\infty}}f^{n}\left(  B_{M}(0)\right)
\subset B_{M}(0),\label{e1}%
\end{equation}
and this set must contain all of the fixed points of $f$ and its iterates.
\end{lemma}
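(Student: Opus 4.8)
The plan is to establish the three claims of Lemma~\ref{lem2.1.2} in turn: that $A$ is compact, that it is a globally attracting set in the sense of Definition~\ref{defAS}, and that it contains every fixed point of $f$ and of each iterate $f^{n}$. Throughout I would work with the closed ball $B_{M}(0)$ identified in the proof of Lemma~\ref{lem2.1.1}, using the key containment observed there, namely $f\bigl(B_{M}(0)\bigr)\subset B_{M}(0)$, which by induction gives the nested sequence $B_{M}(0)\supset f\bigl(B_{M}(0)\bigr)\supset f^{2}\bigl(B_{M}(0)\bigr)\supset\cdots$ of nonempty compact sets (each $f^{n}(B_{M}(0))$ is compact as the continuous image of a compact set, and nonempty).

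First I would show $A$ is a nonempty compact set: it is an intersection of a decreasing nested family of nonempty compact sets, hence nonempty by the finite intersection property (Cantor's intersection theorem), and closed as an intersection of closed sets, and bounded since it lies in $B_{M}(0)$; thus $A$ is compact. Next I would verify invariance, $f(A)\subset A$. For this, note that since the sets $f^{n}(B_{M}(0))$ are nested and decreasing, one has $f(A)=f\bigl(\bigcap_{n\ge 1}f^{n}(B_{M}(0))\bigr)\subset\bigcap_{n\ge 1}f\bigl(f^{n}(B_{M}(0))\bigr)=\bigcap_{n\ge 1}f^{n+1}(B_{M}(0))=\bigcap_{n\ge 2}f^{n}(B_{M}(0))=A$, the last equality because dropping the $n=1$ term from a decreasing intersection does not change it. (The reverse containment, giving equality $f(A)=A$, would follow from a standard compactness argument but is not needed for the statement.)

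For the attracting property (AS2), I would take $U$ to be any bounded open neighborhood of $B_{M}(0)$, or even $U=\mathbb{R}^{m}$ itself after one iterate, since $f(\mathbb{R}^{m})\subset B_{M}(0)$ by definition of $M$. Given $x$, the orbit satisfies $f^{n}(x)\in f^{n}\bigl(B_{M}(0)\bigr)$ for all $n\ge 1$. The distance $d\bigl(f^{n}(x),A\bigr)$ is then bounded above by $\sup\{d(y,A):y\in f^{n}(B_{M}(0))\}=:\delta_{n}$, and I would argue $\delta_{n}\to 0$: if not, there would be points $y_{n}\in f^{n}(B_{M}(0))$ with $d(y_{n},A)\ge\varepsilon$ for some fixed $\varepsilon>0$ along a subsequence; by compactness of $B_{M}(0)$ these have a convergent subsequence with limit $y_{\ast}$, and the nestedness forces $y_{\ast}\in f^{k}(B_{M}(0))$ for every $k$ (since $y_{n}\in f^{k}(B_{M}(0))$ for all $n\ge k$ and this set is closed), hence $y_{\ast}\in A$, contradicting $d(y_{\ast},A)\ge\varepsilon$. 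Finally, for the fixed-point claim: if $f^{k}(p)=p$ then $p=f^{nk}(p)\in f^{nk}(B_{M}(0))$ for every $n$, and by monotonicity of the nested family this puts $p$ in $f^{j}(B_{M}(0))$ for every $j$, so $p\in A$; Lemma~\ref{lem2.1.1} guarantees such points exist. I expect the only mildly delicate point to be the uniform convergence $\delta_{n}\to 0$ in (AS2)---the rest is routine point-set topology---and the compactness-plus-nestedness argument sketched above handles it cleanly.
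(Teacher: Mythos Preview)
Your proposal is correct and follows essentially the same approach as the paper, which simply observes that $A$ is an intersection of compact sets and invokes Lemma~\ref{lem2.1.1} for the fixed-point containment. In fact your argument is considerably more thorough than the paper's very terse proof: you explicitly verify invariance and, more importantly, supply the compactness-plus-nestedness argument for the global attraction property (AS2), which the paper leaves entirely implicit.
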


\begin{proof}
The set $A$ is the intersection of compact sets in $\mathbb{R}^{m}$, so it
must be compact. Furthermore, it follows from Lemma \ref{lem2.1.1} that it
must be contained in $B_{M}(0)$ and contain all fixed points of $f$ and its
iterates. Thus, the proof is complete.
\end{proof}

\noindent As an immediate corollary of the above lemmas, we obtain the
following relation among the standard recurrence related sets (see e.g.
\cite{Shub}); namely, the periodic points $Per(f)$, the \emph{positive limit
set} $L_{+}(f)$ comprised of the closure of the union of all\emph{\ }$\omega
$\emph{-limit sets}, the \emph{nonwandering set} $\Omega(f)$ and the
\emph{chain-recurrent set} $R(f)$:

\begin{corollary}
\label{cor2.1.1} If $f$, $M$ and $A$ are as in Lemmas \ref{lem2.1.1} and
\ref{lem2.1.2}, then%

\begin{equation}
Per(f)\subset L_{+}(f)=\Omega(f)=R(f)=A.\label{e2}%
\end{equation}

\end{corollary}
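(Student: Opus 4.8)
The plan is to squeeze $A$ between the smallest and the largest of the four recurrence-type sets. The inclusions $Per(f)\subseteq L_{+}(f)\subseteq\Omega(f)\subseteq R(f)$ hold for \emph{every} continuous self-map of a metric space and are entirely standard (see, e.g., \cite{Shub,Rob}): a periodic point lies in its own $\omega$-limit set, an $\omega$-limit point is nonwandering, and a nonwandering point is chain recurrent. Consequently it is enough to establish the two ``bookend'' inclusions $R(f)\subseteq A$ and $A\subseteq L_{+}(f)$; combined with the chain above, these force $A\subseteq L_{+}(f)\subseteq\Omega(f)\subseteq R(f)\subseteq A$, so that all four sets coincide with $A$, while the remaining assertion $Per(f)\subseteq L_{+}(f)$ has already been recorded.

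To obtain $R(f)\subseteq A$ I would use that, since $\left\vert f\right\vert\le M$ on all of $\mathbb{R}^{m}$, the ball $B_{M}(0)$ is forward invariant and absorbs every orbit after one step; hence the compact sets $f^{n}(B_{M}(0))$ decrease monotonically to $A$ and $f(A)=A$. Fix $m\in\mathbb{N}$. Given $x\in R(f)$ and $\eta>0$, concatenate periodic $\varepsilon$-chains through $x$ to produce one of length $N>m+1$, say $x=x_{0},x_{1},\dots,x_{N}=x$; every $x_{i}$ with $i\ge 1$ satisfies $\left\vert x_{i}\right\vert\le M+\varepsilon$, hence lies in the compact set $B_{M+1}(0)$ once $\varepsilon<1$. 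Setting $w:=f(x_{N-m-1})\in B_{M}(0)$, a routine telescoping estimate based on the uniform continuity of $f,f^{2},\dots,f^{m+1}$ on $B_{M+1}(0)$ shows that $f^{m}(w)=f^{m+1}(x_{N-m-1})$ lies within $\eta$ of $x_{N}=x$ provided $\varepsilon$ is small enough. Since $f^{m}(w)\in f^{m}(B_{M}(0))$, which is closed, and $\eta>0$ was arbitrary, $x\in f^{m}(B_{M}(0))$; as this holds for every $m$, $x\in A$. Thus $R(f)\subseteq A$, and therefore also $\Omega(f)\subseteq A$, $L_{+}(f)\subseteq A$ and $Per(f)\subseteq A$ — which in particular re-confirms Lemma \ref{lem2.1.2}, since the fixed points of the iterates are periodic.

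For the reverse inclusion $A\subseteq L_{+}(f)$ I would start from the surjectivity $f(A)=A$: every $a\in A$ admits a full backward orbit $a=a_{0},a_{-1},a_{-2},\dots$ contained in the compact set $A$. The idea is then to convert the unavoidable recurrence of this backward orbit into membership of $a$ in a forward $\omega$-limit set — for instance by passing to a cluster point of $(a_{-n})$ and running a diagonal/compactness argument — while exploiting that $A$, being the intersection of the nested compact \emph{connected} images $f^{n}(B_{M}(0))$, is itself compact and connected, which restricts the ways in which $f|_{A}$ can fail to be internally recurrent. This is the step I expect to be the crux: backward recurrence does not automatically yield forward $\omega$-recurrence for non-invertible maps, so establishing $A\subseteq L_{+}(f)$ in the stated generality will require care, most likely drawing on the specific geometry of AZ maps near infinity in addition to the compactness and connectedness of $A$. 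Once this inclusion is secured, the sandwich closes and the corollary follows.
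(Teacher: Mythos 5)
Your first two paragraphs are sound: the chain $Per(f)\subset L_{+}(f)\subset\Omega(f)\subset R(f)$ is standard, and your $\varepsilon$-chain argument with uniform continuity on $B_{M+1}(0)$ does give $R(f)\subset A$, so every recurrence-type set lands inside $A$. The genuine gap is exactly the step you defer, namely $A\subset L_{+}(f)$, and it cannot be repaired: under the stated hypotheses (a continuous AZ map, $A$ defined by (\ref{e1})) that inclusion is simply false, so no amount of care with backward orbits, connectedness of $A$, or the geometry of AZ maps near infinity will close your sandwich. Take $m=1$ and $f(x)=0$ for $x\leq0$, $f(x)=\sqrt{x}$ for $0\leq x\leq1$, $f(x)=1/x$ for $x\geq1$. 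This is a continuous AZ map with $M=1$, and $f(B_{1}(0))=[0,1]=f([0,1])$, so $A=[0,1]$. Yet every orbit converges to $0$ or to $1$; each $x\in(0,1)$ is wandering (its forward images stay above $\sqrt{x-\delta}>x+\delta$); and since chain points never exceed $1+\varepsilon$ while every step from a point $\geq x$ lands at height at least $\min\{\sqrt{x},1-\varepsilon\}-\varepsilon>x$ for small $\varepsilon$, no $\varepsilon$-chain returns to $x$. Hence $Per(f)=L_{+}(f)=\Omega(f)=R(f)=\{0,1\}\subsetneq A$. Your own observation --- that backward recurrence inside $A$ (here $x,x^{2},x^{4},\ldots\rightarrow0$) does not yield forward $\omega$-recurrence for noninvertible maps --- is precisely the obstruction this example realizes.

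For comparison, the paper gives no argument at all: the corollary is asserted as ``immediate'' from Lemmas \ref{lem2.1.1} and \ref{lem2.1.2}, and what is in fact immediate (and what your proposal honestly establishes) is only the inclusion chain $Per(f)\subset L_{+}(f)\subset\Omega(f)\subset R(f)\subset A$. The equalities with $A$ require additional hypotheses --- for instance something forcing $f|_{A}$ to be chain transitive, and even then $\Omega(f)=R(f)$ needs separate justification --- none of which follow from the two lemmas. So the correct resolution of your attempt is not to find the missing argument for $A\subset L_{+}(f)$ but to recognize that the conclusion must be weakened to the inclusion chain (or the hypotheses strengthened); as written, the equalities in (\ref{e2}) do not hold for general continuous AZ maps.
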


It follows from Lemma \ref{lem2.1.2} that $A$ contains at least one minimal
set (defined as a nonempty, closed invariant set with no proper subsets having
the same properties). In the remaining sections we identify some cases when
one of the minimal subsets is also a strange attractor. However, we first
pause to prove a simple result about attractors that are at the other end of
the complexity spectrum on which we are focusing, but have something of the
radial essence of the first type of strange attractor we \ shall analyze in
the next section.

\begin{lemma}
\label{lem2.1.3} Suppose $f$, $M$ and $A$ are as in Lemmas \ref{lem2.1.1} and
\ref{lem2.1.2}, and satisfy the following additional properties:
$(\mathrm{i})$ $f(0)=0$, and $(\mathrm{ii})$ $\left\vert f(x)\right\vert
<\left\vert x\right\vert $ for all $0<\left\vert x\right\vert \leq R_{M}$,
where $R_{M}:=\max\{\left\vert x\right\vert :\left\vert f(x)\right\vert =M\}$.
Then the origin $\{0\}$ is a global attractor for $f$.
\end{lemma}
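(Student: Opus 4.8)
The plan is to show that every orbit starting in the ball $B_{R_M}(0)$ converges to the origin, and then to upgrade this to global attraction using the fact that $A\subset B_M(0)\subset B_{R_M}(0)$. First I would observe that hypothesis (ii), together with $f(0)=0$, forces $f\big(B_{R_M}(0)\big)\subset B_{R_M}(0)$: indeed if $0<|x|\le R_M$ then $|f(x)|<|x|\le R_M$, and $f(0)=0$, so the closed ball $B_{R_M}(0)$ is positively invariant. Hence for any $x_0\in B_{R_M}(0)$ the sequence $r_n:=|f^n(x_0)|$ is nonincreasing (strictly decreasing as long as it is nonzero, by (ii)) and bounded below by $0$, so it converges to some limit $r_*\ge 0$.

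The key step is to rule out $r_*>0$. The plan is a standard compactness/continuity argument: suppose $r_*>0$. Consider the compact annulus $K:=\{x: r_*\le |x|\le R_M\}$, which contains the tail of the orbit $\{f^n(x_0):n\ge 0\}$ (eventually, since $r_n\downarrow r_*$ and $r_n\le R_M$). The function $g(x):=|x|-|f(x)|$ is continuous and, by (ii), strictly positive on $K$ (note $0\notin K$ since $r_*>0$), so it attains a positive minimum $\delta>0$ on $K$. But then $r_{n+1}=|f^{n+1}(x_0)|=|f(f^n(x_0))|\le |f^n(x_0)|-\delta=r_n-\delta$ for all large $n$, which would drive $r_n\to-\infty$, contradicting $r_n\ge 0$. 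Therefore $r_*=0$, i.e. $f^n(x_0)\to 0$ for every $x_0\in B_{R_M}(0)$.

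It remains to verify that $\{0\}$ satisfies the definition of an attractor (Definition~\ref{defA}), in particular the minimality clause (A3) and the global basin. Property (A1) is immediate since $f(0)=0$. For (A2), take the open set $U:=\operatorname{int}B_{R_M}(0)$ (or any open ball of radius slightly larger, shrinking $R_M$ if one wants strict inclusion); we have just shown $d(f^n(x),\{0\})=|f^n(x)|\to 0$ for all $x\in U$. Minimality (A3) is trivial because a single fixed point has no nonempty proper subset. Finally, to see $\{0\}$ is \emph{global}, recall from Lemma~\ref{lem2.1.2} that $A=\bigcap_n f^n(B_M(0))\subset B_M(0)$ and, by Lemma~\ref{lem2.1.1}, $M\le R_M$ (since $|f(x)|=M$ is attained, $R_M$ is well-defined and $B_M(0)\subset B_{R_M}(0)$); every orbit eventually enters $B_M(0)\subset B_{R_M}(0)$ (as $f^n(B_R(0))\subset B_M(0)$ for $R\ge M$), hence by the previous paragraph converges to $0$. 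Thus the basin of $\{0\}$ is all of $\mathbb{R}^m$, and $A=\{0\}$.

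I expect the only genuinely delicate point to be the annulus argument ruling out $r_*>0$ — specifically making sure the annulus avoids the origin (where $g$ vanishes) and that the orbit tail genuinely lies in it; everything else is bookkeeping with the already-established lemmas. One should also double-check the relation $M\le R_M$ and the precise choice of the open neighborhood $U$ so that the strict-inequality hypothesis (ii) is available on its closure.
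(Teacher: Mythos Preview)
Your proof is correct and follows essentially the same approach as the paper: both argue that $|f^{n}(x_{0})|$ is strictly decreasing once the orbit is in $B_{R_{M}}(0)$ and then invoke continuity of $|x|-|f(x)|$ on a compact annulus to rule out a positive limit. The paper uses the annulus $\{a/2\le|x|\le(a+R_{M})/2\}$ rather than your $\{r_{\ast}\le|x|\le R_{M}\}$ and is terser about the global step, but the argument is the same.
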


\begin{proof}
Let $x_{0}$ be any initial point in $\mathbb{R}^{m}$. If $x_{0}$ or any of its
$f$-iterates $x_{k}:=f^{k}(x_{0})$ is zero, we are done; so we assume that
$x_{k}\neq0$ for all $k\in\mathbb{N}$. It follows from the definitions and
hypotheses that $0<\left\vert x_{k}\right\vert \leq R_{M}$ for all $k\geq1$
and that $\left\{  \left\vert x_{k}\right\vert :k\in\mathbb{N}\right\}  $ is a
strictly decreasing sequence of positive numbers. We need only show that
$\left\vert x_{k}\right\vert \rightarrow0$ as $k\rightarrow\infty$. If not,
the monotone convergence property of the reals implies that the sequence
converges to some number $a$ satisfying $0<a<R_{M}$. But this is impossible
owing to the continuity of $\left\vert x\right\vert -\left\vert
f(x)\right\vert $ and the compactness of $\{x\in\mathbb{R}^{m}:$
$a/2\leq\left\vert x\right\vert \leq(a+R_{M})/2\}$. Hence, we conclude that
$\left\vert x_{k}\right\vert \rightarrow0$ as $k\rightarrow\infty$, which
completes the proof.
\end{proof}

\section{Radial Strange Attractors}

Our first result in this section is for a special class AZ maps.

\begin{definition}
\label{EZ1} An AZ map $f:\mathbb{R}^{m}\rightarrow\mathbb{R}^{m}$ such that
there exists a positive $R$ for which $f(x)=0$ whenever $\left\vert
x\right\vert \geq R>0$ is said to be \textbf{eventually zero (EZ)}.
\end{definition}

\noindent If say the map was devised to for applications in population
dynamics, an EZ model might be used in cases where all the species rapidly
become extinct if the sum of all their members becomes too large. More
specifically, the result that follows would apply to ecological dynamics
models for what are called \emph{pioneer species} that satisfy such an
extinction property, while the subsequent theorem should be applicable to
ecological dynamics involving \emph{climax species }(see
\cite{JEAA,Frank,JB1,JB2,JF,Seln,Sum}).

\subsection{Attractors for EZ maps expanding at the origin}

If the origin is a source for an EZ map as is often the case for models of
pioneer species, we have the following result.

\begin{theorem}
\label{thm3.1} Let $f:\mathbb{R}^{m}\rightarrow\mathbb{R}^{m}$ be a continuous
EZ map, with $M$ and $R_{M}$ as in Lemma \ref{lem2.1.3}, satisfying the
following additional properties:

\begin{itemize}
\item[(i)] $f^{-1}\left(  \{0\}\right)  =\{0\}\cup Z$, where $Z=\{x\in
\mathbb{R}^{m}:\left\vert x\right\vert \geq\zeta(x/\left\vert x\right\vert
)>0$ $\}$, $\zeta:\mathbb{S}^{m-1}\rightarrow\mathbb{R}$ is a $C^{1}$ function
satisfying $R_{M}<\zeta(u)<M$ for all $u\in\mathbb{S}^{m-1}$ and
$\mathbb{S}^{m-1}:=\{u\in\mathbb{R}^{m}:\left\vert u\right\vert =1\}$ is the
unit $(m-1) $-sphere.

\item[(ii)] The set $S_{\ast}:=f^{-1}\left(  Z\right)  $ is an $(m-1)$%
-spherical shell of the form%
\[
S_{\ast}=\{x\in\mathbb{R}^{m}:0<\alpha(x/\left\vert x\right\vert
)\leq\left\vert x\right\vert \leq\beta(x/\left\vert x\right\vert )\},
\]
where $\alpha,\beta:\mathbb{S}^{m-1}\rightarrow\mathbb{R}$ are positive
$C^{1}$ functions such that $0<\beta(u)-\alpha(u)<\zeta(u)$ for all
$u\in\mathbb{S}^{m-1}$.

\item[(iii)] $f$ is $C^{1}$ in $D:=\{0\}\cup\{x\in\mathbb{R}^{m}:0<\left\vert
x\right\vert <\zeta\left(  x/\left\vert x\right\vert \right)  \}$ and the
derivative $f^{\prime}(x)$ is invertible at every $x\in D\smallsetminus
\{x\in\mathbb{R}^{m}:0<\alpha(x/\left\vert x\right\vert )<\left\vert
x\right\vert <\beta(x/\left\vert x\right\vert )\}$.

\item[(iv)] The radial derivative denoted as $\partial_{r}\left\vert
f\right\vert $ and defined as
\[
\partial_{r}\left\vert f\right\vert (x):=\left\langle \nabla\left\vert
f\right\vert (x),x/\left\vert x\right\vert \right\rangle ,
\]
when it exists, is such that there are numbers $\lambda,\mu$ with
$\mathfrak{M}/\mathfrak{m}<\lambda<\mu$ for which $\lambda\leq\partial
_{r}\left\vert f\right\vert (x)\leq\mu$ for every $x\in\{x\in\mathbb{R}%
^{m}:0<\left\vert x\right\vert \leq\alpha\left(  x/\left\vert x\right\vert
\right)  \}$ and $-\mu\leq\partial_{r}\left\vert f\right\vert (x)\leq-\lambda$
whenever $x\in\{x\in\mathbb{R}^{m}:\beta\left(  x/\left\vert x\right\vert
\right)  \leq\left\vert x\right\vert <\zeta\left(  x/\left\vert x\right\vert
\right)  \} $. Here $\mathfrak{m}:=\min\{\alpha(u):u\in\mathbb{S}%
^{m-1}\},\mathfrak{M}:=\max\{\beta(u):u\in\mathbb{S}^{m-1}\}$ and $\nabla$ is
the usual gradient operator.
\end{itemize}

\noindent Then
\begin{equation}
\Lambda:=\bar{D}\smallsetminus{\displaystyle\bigcup\nolimits_{n=1}^{\infty}%
}f^{-n}\left(  \mathring{S}_{\ast}\right)  ,\label{e3}%
\end{equation}
is a compact semichaotic strange global attracting set having $m$-dimensional
Lebesgue measure zero, where $\bar{E}$ and $\mathring{E}$ denote the closure
and interior, respectively, of the set $E$.
\end{theorem}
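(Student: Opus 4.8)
The plan is to show that $\Lambda$ is (1) a compact global attracting set, (2) of Lebesgue measure zero, and (3) semichaotic in the sense of Definition~\ref{defSCSA}, by exploiting the ``radial'' structure imposed by hypotheses (i)--(iv). The key geometric picture is that $\bar D$ is the ball bounded by the sphere-like surface $\{|x|=\zeta(x/|x|)\}$ on which $f$ vanishes; inside it, $f$ maps the inner core $\{0<|x|\le\alpha(u)\}$ outward (radial derivative in $[\lambda,\mu]$) and the outer collar $\{\beta(u)\le|x|<\zeta(u)\}$ inward (radial derivative in $[-\mu,-\lambda]$), while the intermediate shell $S_\ast$ between $\alpha$ and $\beta$ is the set carried by $f$ onto $Z$, i.e. eventually to $0$. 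Thus removing $\bigcup_n f^{-n}(\mathring S_\ast)$ from $\bar D$ deletes precisely the points whose orbits enter the ``collapsing'' shell and die; what remains is the invariant set on which the dynamics is genuinely recurrent.

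First I would establish (1): since $f$ is EZ with $|f|\le M$ and $R_M<\zeta<M$, one checks $f(\bar D)\subset\bar D$ and that $A=\bigcap_n f^n(B_M(0))\subset\bar D$, so $\Lambda$ inherits compactness from $\bar D$ minus an open set intersected with the invariant core; invariance $f(\Lambda)\subset\Lambda$ follows because $x\notin\bigcup_n f^{-n}(\mathring S_\ast)$ forces $f(x)\notin\bigcup_n f^{-n}(\mathring S_\ast)$. Global attraction: any orbit either eventually lands in $\mathring S_\ast$ (hence is mapped to $Z$, then to $0$, and $0\in\Lambda$ provided $f$ expands at $0$ so that... actually $0\in\bar D\setminus\bigcup f^{-n}(\mathring S_\ast)$ since $f(0)=0\notin \mathring S_\ast$), or it never does, in which case it stays in $\Lambda$ after one step. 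The open neighborhood $U$ of (AS2)/(A2) is taken to be a neighborhood of $B_M(0)$.

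Next, (2): the crux is that $S_\ast$ is a genuine $(m-1)$-shell with nonempty interior but, more importantly, that $f^{-1}(\mathring S_\ast)$ and its iterated preimages, together with $S_\ast$ itself, cover $\bar D$ up to a measure-zero residue. Here the radial-derivative bounds in (iv) do the work: on the inner core $\partial_r|f|\ge\lambda>\mathfrak M/\mathfrak m$, so $|f|$ grows by at least a factor $\lambda$ per radial unit, which (after composing with the analogous contraction estimate on the collar) forces the ``non-escaping'' set to be squeezed to measure zero — I would make this precise by a Cantor-set-type argument along each radial ray $u\in\mathbb S^{m-1}$, showing the one-dimensional slice $\Lambda\cap\{tu:t\ge0\}$ has Lebesgue measure zero with a uniform (in $u$) rate, then integrating over $\mathbb S^{m-1}$ via Fubini. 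The condition $\mathfrak M/\mathfrak m<\lambda$ guarantees the expansion dominates the angular distortion of the shells $\alpha,\beta$.

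Finally, (3): on $\Lambda$ the derivative estimate $|\partial_r|f||\ge\lambda$ on the core and $\ge\lambda$ (in absolute value) on the collar yields, along any orbit staying in $\Lambda$, that $\|f'(f^{k-1}(x))\|\ge\lambda>1$ infinitely often — in fact the orbit must alternate between core and collar (it cannot stay in either, since each is mapped strictly across), so $n^{-1}\sum_{k=1}^n\log\|f'(f^{k-1}(x))\|\ge\log\lambda>0$ for a suitable invariant $A_\ast\subset\Lambda$ (e.g. $\Lambda$ minus the at-most-codimension-one set where $f'$ fails to be invertible, intersected with orbits avoiding $0$). That is precisely (SCAS2)/(SCSA3), with $\ell=\log\lambda$. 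Non-integer fractal dimension (SCSA2) follows from the self-similar Cantor structure built in step~(2): the uniform expansion/contraction ratios produce a dimension strictly between $m-1$ and $m$. I expect the main obstacle to be step~(2) — controlling the measure and fractal dimension uniformly over all radial directions $u$, since the functions $\alpha,\beta,\zeta$ vary with $u$ and the preimages $f^{-n}(\mathring S_\ast)$ need not respect the radial foliation exactly; the hypothesis $\mathfrak M/\mathfrak m<\lambda$ and the $C^1$ regularity of $\alpha,\beta,\zeta$ are exactly what is needed to absorb this angular mismatch, and making that absorption rigorous is the technical heart of the proof.
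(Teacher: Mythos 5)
Your proposal follows essentially the same route as the paper's proof: both obtain $\Lambda$ as the residual Cantor-type set produced by iteratively pre-imaging the middle shell $S_{\ast}$ (the paper packages this as a $C^{1}$ identification with the Cantor cone $\left(\mathbb{S}^{m-1}\times C\right)/\mathbb{S}^{m-1}\times 0$, whereas you organize it ray-by-ray with a Fubini argument), and both derive the semichaotic property from the pointwise estimate $\log\left\Vert f^{\prime}(x)\right\Vert \geq \log\left\vert \partial_{r}\left\vert f\right\vert (x)\right\vert \geq \log\lambda$ with $\ell=\log\lambda$. The only quibble is your remark that orbits must alternate between the inner core and outer collar, which is neither accurate (the core and collar each map radially across the whole of $\bar{D}$, so no alternation is forced) nor needed, since the pointwise radial bound in (iv) already yields the Lyapunov estimate exactly as in the paper.
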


\begin{proof}
It follows directly from the hypotheses and construction that $\Lambda$ is a
compact global attracting set for $f$. Moreover, excepting the origin, it is
essentially a two-component Cantor set (generated by $(m-1)$-spherical shells
rather than intervals). Consequently, it must be a fractal set. More
precisely, we see from the properties (in particular (iv)) of the map that we
have%
\[
f^{-1}\left(  S_{\ast}\right)  =S_{0}\cup S_{1},
\]
where the union is disjoint and both $S_{0}$ and $S_{1}$ are open
$(m-1)$-spherical shells such that $S_{i}\subset\mathring{\Sigma}_{i}$,
$i=0,1$, where%
\[
\Sigma_{0}:=\{0\}\cup\{x\in\mathbb{R}^{m}:0<\left\vert x\right\vert \leq
\alpha\left(  x/\left\vert x\right\vert \right)  \}\text{ and }\Sigma
_{1}:=\{x\in\mathbb{R}^{m}:\beta\left(  x/\left\vert x\right\vert \right)
\leq\left\vert x\right\vert \leq\zeta\left(  x/\left\vert x\right\vert
\right)  \},
\]
and we note that we have the partition
\[
\bar{D}=\Sigma_{0}\cup S_{\ast}\cup\Sigma_{1}.
\]
Repeating the pre-imaging operation on the open $(m-1)$-spherical shells, we
obtain the partitions of open cells%
\[
f^{-1}\left(  S_{0}\right)  =S_{00}\cup S_{01}\text{ and }f^{-1}\left(
S_{1}\right)  =S_{10}\cup S_{11},
\]
where $S_{0i},S_{1i}\subset\mathring{\Sigma}_{0i}$, $i=0,1$, and the sets are
defined by the following partitions written in what is obviously meant by
`radial order'
\[
\Sigma_{0}=\Sigma_{00}\cup S_{0}\cup\Sigma_{01}\text{ and }\Sigma_{1}%
=\Sigma_{10}\cup S_{1}\cup\Sigma_{11}.
\]
But this is, if continued, just the standard inductive construction for the
Cantor set, so we obtain the disjoint union representation%
\[
\Lambda={\displaystyle\bigvee\nolimits_{s\in2^{\mathbb{N}}}}\Sigma_{s},
\]
where, as usual, $2^{\mathbb{N}}$ is the set of maps $s:\mathbb{N}%
\rightarrow\{0,1\}$, which can, of course be identified with the set of binary
sequences%
\[
\{.a_{1}a_{2}a_{3}\ldots:a_{i}=0\text{ or }1\text{ for all }i\in\mathbb{N}\}.
\]
Whence, it follows directly from a simple argument, based on sliding the shell
boundaries along rays, that $\Lambda$ is homeomorphic and actually $C^{1}$
diffeomorphic on the complement of the origin to a fractal `cone' pinched at
the origin; namely
\begin{equation}
\Lambda\cong\left(  \mathbb{S}^{m-1}\times C\right)  /\mathbb{S}^{m-1}%
\times0,\label{eq4}%
\end{equation}
where $\cong$ denotes homeomorphic, $C$ is a standard two-component Cantor set
on the unit interval $[0,1]$, and the space on the right above has the usual
quotient topology. For convenience, we shall refer to (\ref{eq4}) as the
\emph{Cantor cone of} $\ \mathbb{S}^{m-1}$. This conclusively shows that the
attracting set is fractal. As for the sensitive dependence on initial
conditions, this is an immediate consequence of the construction of $\Lambda$
and (iv). Indeed, we compute that for all $x\in D$
\begin{align*}
n^{-1}\log\left\Vert (f^{n})^{\prime}(x)\right\Vert  &  =n^{-1}%
{\displaystyle\sum\nolimits_{k=1}^{n}}\log\left\Vert f^{\prime}\left(
f^{k-1}(x)\right)  \right\Vert \\
&  \geq n^{-1}{\displaystyle\sum\nolimits_{k=1}^{n}}\log\left\vert
\partial_{r}\left\vert f\right\vert (f^{k-1}(x))\right\vert \geq\log\lambda>0,
\end{align*}
so $\lim\inf n^{-1}\log\left\Vert (f^{n})^{\prime}(x)\right\Vert >0$, and the
sensitive dependence is established, which completes the proof.
\end{proof}

\noindent Observe that the maps described in the above theorem all have what
one might call a \emph{degenerate snap-back repeller} at the origin
(\emph{c.f.} Marotto \cite{Mar} and Chen \& Aihara \cite{CA}), which might be
related to their chaotic nature in some as yet to be determined way.

We note here one can approximate the fractal dimension of the attractor in the
above theorem. Using standard fractal techniques (see \emph{e.g.}\cite{Falc}),
a straightforward but more laborious argument than we care to go into here
yields the following estimate for the Hausdorff dimension $\dim_{H} $:%
\begin{equation}
m-1+\log2/\log(1+\mu)\leq\dim_{H}\left(  \Lambda\right)  \leq m-1+\log
2/\log(1+\lambda).\label{eq5}%
\end{equation}

If we examine the proof of Theorem \ref{thm3.1}, more information on the
structure of the restriction of the map to the attracting set can be readily
obtained. First, we may recast the Cantor cone (\ref{eq4}) in the form%
\begin{equation}
\Lambda\cong\left(  \mathbb{S}^{m-1}\times2^{\mathbb{N}}\right)  /\left(
\mathbb{S}^{m-1}\times0\right)  ,\label{e6}%
\end{equation}
where $2^{\mathbb{N}}$ is given the metric topology generated by%
\[
d_{B}\left(  s,\tilde{s}\right)  :={\displaystyle\sum\nolimits_{n=1}^{\infty}%
}2^{-n}\left\vert s(n)-\tilde{s}(n)\right\vert .
\]
Then, by making fairly obvious modifications of a standard argument, used for
example in studying the Smale horseshoe via symbolic dynamics (see, e.g.
\cite{Shub}), it is a straightforward matter to verify the following result.

\begin{corollary}
\label{cor3.1} The hypotheses of Theorem \ref{thm3.1} implies that $f$ is
conjugate on $\Lambda$ to a map of the form
\[
\hat{f}:\left(  \mathbb{S}^{m-1}\times2^{\mathbb{N}}\right)  /\left(
\mathbb{S}^{m-1}\times0\right)  \rightarrow\left(  \mathbb{S}^{m-1}%
\times2^{\mathbb{N}}\right)  /\left(  \mathbb{S}^{m-1}\times0\right)  ,
\]
where
\[
\hat{f}(x,s):=\left(  \nu(x,s),\sigma(s)\right)  ,
\]
$\sigma$ is the shift map $($with $\sigma(.a_{1}a_{2}a_{3}\ldots)=(.a_{2}%
a_{3}a_{4}\ldots))$ and $\nu$ is a continuous map.
\end{corollary}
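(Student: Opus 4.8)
The plan is to build the conjugacy explicitly from the Cantor-cone decomposition already established in the proof of Theorem~\ref{thm3.1}. Recall that there we obtained the disjoint union $\Lambda = \bigvee_{s\in 2^{\mathbb{N}}} \Sigma_s$, where each $\Sigma_s$ is (away from the origin) a radially-parametrized copy of $\mathbb{S}^{m-1}$ indexed by a binary sequence $s=.a_1a_2a_3\ldots$, and the origin is the single point common to the ``degenerate'' end of every branch. First I would make the coding map precise: define $h:\Lambda \to (\mathbb{S}^{m-1}\times 2^{\mathbb{N}})/(\mathbb{S}^{m-1}\times 0)$ by sending a point $x\in\Sigma_s\setminus\{0\}$ to $\bigl(x/\lvert x\rvert,\, s\bigr)$ and $0\mapsto [\,\cdot\,,0]$ (the collapsed class). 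Here $x/\lvert x\rvert$ is the angular coordinate and $s$ is the itinerary recording, for each $n$, which of the two components $S_0$ or $S_1$ the point $f^{n-1}(x)$ lands in under the pre-imaging construction. The radial order built into the nested shells $\Sigma_{a_1\ldots a_k}$ makes this itinerary well defined and shows $h$ is a bijection onto the quotient; continuity of $h$ and $h^{-1}$ off the origin follows from the $C^1$ dependence of the shell boundaries $\alpha,\beta,\zeta$ on the angular variable (the ``sliding along rays'' argument from the theorem), and continuity at the origin follows because $\mathrm{diam}(\Sigma_s$ restricted to radius-$\le\epsilon$ shells$)\to 0$ uniformly, matching the collapse in the quotient topology and in the metric $d_B$.

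Next I would verify that $h$ intertwines $f$ with a skew product of the asserted form. The key structural fact is that $f$ maps the branch $\Sigma_{a_1 a_2 a_3\ldots}$ onto the branch $\Sigma_{a_2 a_3 a_4\ldots}$: this is exactly the statement that applying $f$ once advances the itinerary by one symbol, which is how the itineraries were defined via iterated pre-images $f^{-1}(S_{w}) = S_{w0}\cup S_{w1}$. Hence in the coordinates $(u,s)$ the second component of $h\circ f\circ h^{-1}$ is precisely the shift $\sigma$. The first (angular) component is then some map $\nu(u,s)$ depending on both variables — there is no reason for $f$ to act on the angular sphere independently of the radial branch — and I would simply define $\nu := \mathrm{pr}_{1}\circ h\circ f\circ h^{-1}$, noting that it is continuous as a composition of continuous maps (again using the $C^1$ structure off the origin and the uniform shrinking at the origin to handle the collapsed fiber). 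This yields $\hat f(u,s) = (\nu(u,s),\sigma(s))$ with $h\circ f = \hat f\circ h$, i.e. the desired conjugacy.

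Finally, a small amount of care is needed at the cone point: one must check that $\hat f$ is well defined on the quotient, i.e. that it respects the identification $\mathbb{S}^{m-1}\times 0 \sim \mathrm{pt}$. This is immediate since $f(0)=0$ (hypothesis (i) of Lemma~\ref{lem2.1.3}, inherited here) and $\sigma(0)=0$, so the collapsed fiber is invariant and $\hat f$ descends to the quotient.

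I expect the main obstacle to be purely topological bookkeeping rather than anything deep: showing that the itinerary map $h$ is a genuine homeomorphism, and in particular that it is continuous and open \emph{at the origin}, where the smooth radial structure degenerates and infinitely many branches accumulate. Everything away from the origin reduces to the classical Smale-horseshoe symbolic-dynamics argument (cf.\ \cite{Shub}) applied fiberwise over $\mathbb{S}^{m-1}$, using that the two pre-image shells are uniformly separated and their nested diameters contract geometrically by property (iv) of Theorem~\ref{thm3.1}; the only genuinely new point compared with the standard one-dimensional case is carrying the angular parameter $u\in\mathbb{S}^{m-1}$ along as a passive coordinate and checking the quotient topology matches the metric $d_B$ on $2^{\mathbb{N}}$ near the pinch. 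Since that verification is routine modulo the uniform-contraction estimate already in hand, I would present it briefly and refer to \cite{Shub} for the template.
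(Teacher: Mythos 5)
Your proposal is correct and follows essentially the same route the paper intends: the paper disposes of this corollary by citing the standard Smale-horseshoe symbolic-dynamics argument applied to the Cantor-cone decomposition $\Lambda=\bigvee_{s\in 2^{\mathbb{N}}}\Sigma_{s}$ from the proof of Theorem \ref{thm3.1}, and your itinerary map $h$, the shift on the second coordinate, and $\nu:=\mathrm{pr}_{1}\circ h\circ f\circ h^{-1}$ are exactly that argument spelled out, including the needed care at the pinched fiber. One trivial correction: $f(0)=0$ should be drawn from hypothesis (i) of Theorem \ref{thm3.1} (since $0\in f^{-1}(\{0\})$) rather than from Lemma \ref{lem2.1.3}, whose hypotheses are not assumed here.
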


If we have additional information concerning the behavior of the map on the
spherical shell comprising the global attractor given by (\ref{e3}) in Theorem
\ref{thm3.1}, it is sometimes possible to prove that one has in fact a true
chaotic strange attractor. An example of such a result is the following, which
clearly can be readily generalized.

\begin{corollary}
\label{cor3.2} Suppose in addition to the hypotheses of Theorem \ref{thm3.1},
$f$ satisfies the following property: There exists a finite set of $C^{1}$
curves $\{\gamma_{1},\gamma_{2},\ldots,\gamma_{k}\}\subset\bar{D}$ such that:

\begin{itemize}
\item[(a)] Each curve begins at $x=0$ and ends at a distinct point of the
boundary $\partial\bar{D}=\{x\in\mathbb{R}^{m}:x=\zeta\left(  x/\left\vert
x\right\vert \right)  \}$ of $\bar{D}.$

\item[(b)] The curves are all transverse to $\partial S_{\ast}=$
$\{x\in\mathbb{R}^{m}:x=\alpha\left(  x/\left\vert x\right\vert \right)
\}\cup\{x\in\mathbb{R}^{m}:x=\beta\left(  x/\left\vert x\right\vert \right)
\}.$

\item[(c)] $f(\gamma_{j})=\gamma_{j+1}$, $1\leq j\leq k-1$, and $f(\gamma
_{k})=\gamma_{1}$.

\item[(d)] The set $E:=\gamma_{1}\cup\gamma_{2}\cup\cdots\cup\gamma_{k}$ is
\textbf{conically attracting} in the sense that there is a conical open set
(pinched at the origin) $W$ such that $d(f^{n}(x),E)\rightarrow0$ as
$n\rightarrow\infty$ whenever $x\in E.$
\end{itemize}

Then
\[
\mathfrak{A}:=\Lambda\cap E,
\]
where $\Lambda$ is as in (\ref{e3}), is a chaotic strange global attractor.
\end{corollary}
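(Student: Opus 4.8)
The plan is to verify conditions (CSA1)--(CSA3) of Definition~\ref{defCSA} for the set $\mathfrak{A}=\Lambda\cap E=\bigcup_{j=1}^{k}(\gamma_{j}\cap\Lambda)$, leaning throughout on the Cantor-cone structure of $\Lambda$ produced in the proof of Theorem~\ref{thm3.1} and on the symbolic dynamics of Corollary~\ref{cor3.1}. The first step is to describe $\mathfrak{A}$ precisely. Since $f$ is a local $C^{1}$ diffeomorphism on $\Sigma_{0}\cup\Sigma_{1}$ by Theorem~\ref{thm3.1}(iii), $f(\gamma_{j})=\gamma_{j+1}$, and each $\gamma_{j}$ is transverse to $\partial S_{\ast}$, transversality propagates under preimaging, so each $\gamma_{j}$ is transverse to every shell boundary $\partial S_{s}$ of the Cantor construction. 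Because $\gamma_{j}$ joins the origin (which lies in every innermost cell $\Sigma_{0^{n}}$) to a point of $\partial\bar{D}$ (which lies in every outermost cell $\Sigma_{1^{n}}$), connectedness forces $\gamma_{j}$ to meet every cell and to cross every deleted shell, so $\gamma_{j}\cap\Lambda$ is a Cantor set that, off the origin, is $C^{1}$-diffeomorphic to the radial Cantor factor $C$ of $\Lambda$. Hence $\mathfrak{A}$ is a finite union of Cantor sets; it is compact, perfect, nonempty (it contains $0$), and positively invariant since $f(\Lambda)\subset\Lambda$ (Theorem~\ref{thm3.1} makes $\Lambda$ an attracting set) and $f(E)=E$ by (c).

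For (CSA2) I would parametrize each $\gamma_{j}$ by arclength; this exhibits $\gamma_{j}\cap\Lambda$ as bi-Lipschitz equivalent to $C$ away from the origin, and Lipschitz-controlled near $0$ using that $\gamma_{j}$ has a well-defined $C^{1}$ tangent there and the deleted shells near $0$ are essentially concentric spheres. Thus $\dim_{H}(\gamma_{j}\cap\Lambda)=\dim_{H}C$, which by the estimate (\ref{eq5}) of Theorem~\ref{thm3.1} lies in $[\,\log 2/\log(1+\mu),\,\log 2/\log(1+\lambda)\,]$. Since $\alpha(u)<\beta(u)$ for all $u$ forces $\mathfrak{M}/\mathfrak{m}>1$, we get $\lambda>1$, so this interval is contained in $(0,1)$; hence $\dim_{H}\mathfrak{A}=\max_{j}\dim_{H}(\gamma_{j}\cap\Lambda)=\dim_{H}C$ is a noninteger in $(0,1)$, which is (CSA2).

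For (CSA3) the heart of the matter is the symbolic dynamics of $f|_{\mathfrak{A}}$. Restricting the conjugacy of Corollary~\ref{cor3.1} to $\mathfrak{A}$, and using that $f$ shifts the shell-address of a point of $\Lambda$ by $\sigma$ while carrying $\gamma_{j}$ onto $\gamma_{j+1}$, I would identify each $\gamma_{j}\cap\Lambda$ with a copy of $2^{\mathbb{N}}$ on which $f^{k}$ acts as the full one-sided shift on $2^{k}$ symbols --- the transversality from Step~1 being exactly what guarantees the ``one subarc per cylinder'' normal form that makes this a genuine rather than a proper subshift. Since the full $2^{k}$-shift is topologically mixing with dense periodic points and $f$ cyclically permutes $\gamma_{1}\cap\Lambda,\dots,\gamma_{k}\cap\Lambda$, a routine argument (choose the iterate so that its residue modulo $k$ aligns the cyclic index, then invoke mixing of $f^{k}$) yields topological transitivity of $f|_{\mathfrak{A}}$, and the $f$-orbits of the period-$\ell k$ points of $f^{k}$ on $\gamma_{1}\cap\Lambda$ form a dense subset of $\mathrm{Per}(f)\cap\mathfrak{A}$. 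Hence $f|_{\mathfrak{A}}$ is chaotic in the sense of Definition~\ref{defCh}.

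It remains to establish (CSA1), i.e.\ that $\mathfrak{A}$ is an attractor in the sense of Definition~\ref{defA}. Conditions (A1)/(A2) follow from the foregoing together with hypothesis (d): taking $U=W$ the conical neighbourhood there (reading its attracting condition for the points of $W$, as evidently intended), one has $d(f^{n}(x),E)\to0$ by (d) and $d(f^{n}(x),\Lambda)\to0$ because $\Lambda$ is globally attracting by Theorem~\ref{thm3.1}, and the uniform transversality of $E$ to the shell foliation of $\Lambda$ (compactness of $\mathfrak{A}$ plus the tangents of the $\gamma_{j}$ at $0$) gives a constant $c$ with $d(q,\mathfrak{A})\le c\,[\,d(q,E)+d(q,\Lambda)\,]$ for $q$ near $\mathfrak{A}$, whence $d(f^{n}(x),\mathfrak{A})\to0$. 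For minimality (A3): if $A'\subsetneq\mathfrak{A}$ were closed, invariant and attracted an open $U'\supset A'$, then since $\mathfrak{A}$ is a perfect compact metric space on which $f$ is topologically transitive, the Birkhoff transitivity theorem supplies a point $z\in U'\cap\mathfrak{A}$ with dense forward orbit; perfectness of $\mathfrak{A}$ forces $\omega(z)=\mathfrak{A}$, while $z\in U'$ forces $\omega(z)\subset A'$, giving $A'=\mathfrak{A}$, a contradiction. Thus $\mathfrak{A}$ is a chaotic strange global attractor. The one genuinely delicate point, and where I expect to spend the most effort, is the symbolic-dynamics step of (CSA3): verifying from (a)--(c) that $f^{k}|_{\gamma_{j}\cap\Lambda}$ is conjugate to the \emph{full} $2^{k}$-shift (equivalently that no $\gamma_{j}$ re-enters a Cantor cylinder), since both transitivity and density of periodic points hinge on the shift being full; relaxing this would require a sofic recoding or a mild strengthening of transversality hypothesis (b).
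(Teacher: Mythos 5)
Your proposal is correct and follows essentially the same route as the paper: the chaos on $\mathfrak{A}$ is obtained from the shell-address symbolic dynamics of Corollary \ref{cor3.1} intertwined with the cyclic permutation of the curves $\gamma_{1},\ldots,\gamma_{k}$, yielding topological transitivity and dense periodic points, with the attractor and fractal properties coming from Theorem \ref{thm3.1}. You are merely more explicit than the paper about the points it asserts ``at once'' (the Hausdorff-dimension computation for $\Lambda\cap E$, minimality via Birkhoff transitivity, and the ``one intersection point per Cantor cylinder'' uniqueness that the paper's choice of $\breve{x}$ implicitly uses), which is a refinement rather than a different argument.
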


\begin{proof}
It follows at once from the hypotheses and Theorem \ref{thm3.1} that
$\mathfrak{A}$ is a semichaotic strange global attractor. Consequently, it
suffices to prove that $f$ restricted to $\mathfrak{A}$, denoted as $g:=$
$f|_{\mathfrak{A}}$, which can be identified with its conjugate described in
Corollary \ref{cor3.1}, is both topologically transitive and that its periodic
points are dense. We first prove that given any $x,\tilde{x}\in\mathfrak{A}$
and any open neighborhood $U$ of $x$, there is a point $\breve{x}\in U$ and an
$N\in\mathbb{N}$ such that $g^{N}(\breve{x})=\tilde{x}$, which clearly implies
topological transitivity. Let $x\in\gamma_{i}\cap\Lambda$ belong to the
$(m-1)$-spherical shell corresponding to $s=.a_{1}a_{2}\ldots$and $\tilde
{x}\in\gamma_{j}\cap\Lambda$ belong to the $(m-1)$-spherical shell
corresponding to $\tilde{s}=.\tilde{a}_{1}\tilde{a}_{2}\ldots$. Define $q$ to
be the least nonnegative integer such that $g^{q}(\gamma_{i})=\gamma_{j}$ and
note that it follows from (c) that $g^{lk+q}(\gamma_{i})=\gamma_{j}$ for every
nonnegative integer $l$. Given any $\epsilon>0$, we can obviously choose $l$
so large that $d_{B}(s,\breve{s})<\epsilon$, where
\[
\breve{s}:=.a_{1}a_{2}\ldots a_{lk+q}\tilde{s}:=.a_{1}a_{2}\ldots
a_{lk+q}\tilde{a}_{1}\tilde{a}_{2}\ldots,
\]
and then define $\breve{x}$ to be the intersection of $\gamma_{i}$ with the
$(m-1)$-spherical shell associated to $\breve{s}$. Clearly, by selecting
$\epsilon$ sufficiently small, we can guarantee that $\breve{x}\in U$. But it
follows from our construction that $\sigma^{lk+q}(\breve{s})=\tilde{s}$, so
$g^{lk+q}(\breve{x})=\tilde{x}$ and the topological transitivity follows.
The density of the periodic points can be readily verified by a straightforward
modification of the transitivity argument. To wit, let $x\in\gamma_{i}%
\cap\Lambda$ belong to the $(m-1)$-spherical shell corresponding to
$s=.a_{1}a_{2}\ldots$. Given any $\epsilon>0$, select $l$ so large that
$d_{B}(s,\overline{.a_{1}\ldots a_{lk}})<\epsilon$, where $\overline
{.a_{1}\ldots a_{lk}}$ is the binary sequence formed by successively
concatenating the finite sequence $.a_{1}a_{2}\ldots a_{lk}$ with itself,
\emph{i.e}.%
\[
\overline{.a_{1}\ldots a_{lk}}:=.a_{1}a_{2}\ldots a_{lk}a_{1}a_{2}\ldots
a_{lk}\ldots.
\]
It is easy to see that  $\overline{.a_{1}\ldots a_{lk}}$ is a periodic point
of period $lk$ for the shift map $\sigma$ and if we select $\bar{x}$ to be the
point of $\gamma_{i}$ on the $(m-1)$-spherical shell corresponding to
$\overline{.a_{1}\ldots a_{lk}}$, then $g^{lk}(\bar{x})=\bar{x}$, so the proof
is complete since $\epsilon$ is arbitrary.
\end{proof}

\subsection{Attractors for AZ maps contracting at the origin}

When the origin is a sink rather than a source, as it typically is for models
of populations comprised entirely of climax species, we can relax the
extermination requirement of Theorem \ref{thm3.1} and obtain analogous results
concerning strange attractor for globally $C^{1}$ maps. An example is the following.

\begin{theorem}
\label{thm3.2} Suppose $f:\mathbb{R}^{m}\rightarrow\mathbb{R}^{m}$ is a
$C^{1}$ EZ map, with $M$ and $R_{M}$ as in Lemma \ref{lem2.1.3}, for which the
following properties obtain:

\begin{itemize}
\item[(i)] $f^{-1}\left(  \{0\}\right)  =\{0\}$, $\left\Vert f^{\prime
}(0)\right\Vert <1$ and $\{0\}$ has a basin of attraction of the form%
\[
\mathcal{B}(0):=\{x\in\mathbb{R}^{m}:0\leq\left\vert x\right\vert <\alpha
_{0}(x/\left\vert x\right\vert )\},
\]
where $\alpha_{0}:\mathbb{S}^{m-1}\rightarrow\mathbb{R}$ is a positive $C^{1}
$ function and $f^{-1}\left(  \mathcal{B}(0)\right)  $ is a semi-infinite
$(m-1)$-spherical shell of the form%
\[
Z:=\{x\in\mathbb{R}^{m}:\zeta\left(  x/\left\vert x\right\vert \right)
<\left\vert x\right\vert \},
\]
where $\zeta:\mathbb{S}^{m-1}\rightarrow\mathbb{R}$ is a $C^{1}$ function
satisfying $R_{M}<\zeta(u)<M$ for all $u\in\mathbb{S}^{m-1}$.

\item[(ii)] The set $S_{\ast}:=f^{-1}\left(  \bar{Z}\right)  $ is an
$(m-1)$-spherical shell of the form%
\[
S_{\ast}=\{x\in\mathbb{R}^{m}:0<\alpha(x/\left\vert x\right\vert
)\leq\left\vert x\right\vert \leq\beta(x/\left\vert x\right\vert )\},
\]
where $\alpha,\beta:\mathbb{S}^{m-1}\rightarrow\mathbb{R}$ are positive
$C^{1}$ functions such that $0<\beta(u)-\alpha(u)<\zeta(u)$ for all
$u\in\mathbb{S}^{m-1}$.

\item[(iii)] $f^{\prime}(x)$ is invertible at every $x\in D\smallsetminus
\{x\in\mathbb{R}^{m}:0<\alpha(x/\left\vert x\right\vert )<\left\vert
x\right\vert <\beta(x/\left\vert x\right\vert )\}$.

\item[(iv)] The radial derivative denoted as $\partial_{r}\left\vert
f\right\vert $ and defined as
\[
\partial_{r}\left\vert f\right\vert (x):=\left\langle \nabla\left\vert
f\right\vert (x),x/\left\vert x\right\vert \right\rangle ,
\]
when it exists, is such that there are numbers $\lambda,\mu$ with
$\mathfrak{M}/\mathfrak{m}<\lambda<\mu$ for which $\lambda\leq\partial
_{r}\left\vert f\right\vert (x)\leq\mu$ for every $x\in\{x\in\mathbb{R}%
^{m}:0<\left\vert x\right\vert \leq\alpha\left(  x/\left\vert x\right\vert
\right)  \}$ and $-\mu\leq\partial_{r}\left\vert f\right\vert (x)\leq-\lambda$
whenever $x\in\{x\in\mathbb{R}^{m}:\beta\left(  x/\left\vert x\right\vert
\right)  \leq\left\vert x\right\vert <\zeta\left(  x/\left\vert x\right\vert
\right)  \} $. Here $\mathfrak{m}:=\min\{\alpha(u):u\in\mathbb{S}%
^{m-1}\},\mathfrak{M}:=\max\{\beta(u):u\in\mathbb{S}^{m-1}\}$ and $\nabla$ is
the usual gradient operator.
\end{itemize}

\noindent Then
\begin{equation}
\Gamma:=\{0\}\vee\Gamma_{C},\label{e7}%
\end{equation}
where
\[
\Gamma_{C}:=\bar{D}\smallsetminus{\displaystyle\bigcup\nolimits_{n=1}^{\infty
}}f^{-n}\left(  \mathring{S}_{\ast}\right)  ,
\]
is a compact semichaotic strange minimal global attracting set having
$m$-dimensional Lebesgue measure zero.
\end{theorem}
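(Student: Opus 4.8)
The plan is to recycle, almost verbatim, the proof of Theorem~\ref{thm3.1} for every part of the assertion except minimality, and to treat minimality separately as the one genuinely new ingredient. For the recycled part: hypotheses (i)--(iv) are, away from the origin, exactly those of Theorem~\ref{thm3.1}, so the pre-imaging operation $f^{-1}(S_{\ast})=S_{0}\cup S_{1}$ with $S_{i}\subset\mathring{\Sigma}_{i}$, iterated indefinitely, produces the same two-component Cantor-set-of-shells and the same identification $\Gamma_{C}\cong(\mathbb{S}^{m-1}\times C)/(\mathbb{S}^{m-1}\times0)$; hence $\Gamma=\{0\}\vee\Gamma_{C}$ is compact, has $m$-dimensional Lebesgue measure zero, is fractal with the same Hausdorff-dimension bracketing as in (\ref{eq5}), and the sensitive-dependence inequality $n^{-1}\log\|(f^{n})'(x)\|\ge n^{-1}\sum_{k=1}^{n}\log|\partial_{r}|f|(f^{k-1}(x))|\ge\log\lambda>0$ holds on the invariant set $\Gamma_{C}$ by (iv), giving (SCSA3). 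The only structural difference from Theorem~\ref{thm3.1} is the itinerary of a discarded point: instead of being mapped to $0$ in one step it is carried into $\bar{Z}$, then into the basin $\mathcal{B}(0)$, and thence to $0$; consequently every $x\in\mathbb{R}^{m}$ either eventually enters $\bigcup_{n}f^{-n}(\mathring{S}_{\ast})$, in which case $\omega(x)=\{0\}\subset\Gamma$, or remains in $\bar{D}$ for all forward time and therefore lies in $\Gamma_{C}$. In either case $d(f^{n}(x),\Gamma)\to0$, so $\Gamma$ is a compact global attracting set.

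For minimality I would exploit the general fact that, whenever it exists, the minimal global attracting set is precisely $\overline{\bigcup_{x\in\mathbb{R}^{m}}\omega(x)}$: this set is closed, positively invariant and attracts all of $\mathbb{R}^{m}$, and since any global attracting set $A'$ satisfies $\omega(x)\subset A'$ for every $x$, it is contained in $A'$. Thus, having already shown $\Gamma$ is a global attracting set, it remains only to prove $\Gamma\subset\overline{\bigcup_{x}\omega(x)}$. Because $\omega(x)=\{0\}$ for every $x$ in the basin $\mathcal{B}(0)$, the origin is accounted for, and the problem reduces to showing that every point of $\Gamma_{C}$ lies in the closure of the union of the $\omega$-limit sets.

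To establish this I would invoke the conjugacy of $f|_{\Gamma_{C}}$ to a skew product $\hat{f}(\xi,s)=(\nu(\xi,s),\sigma(s))$ over the full one-sided $2$-shift $\sigma$ with $\mathbb{S}^{m-1}$-fibres, exactly as in Corollary~\ref{cor3.1} (whose proof transfers unchanged to the present hypotheses), together with the observation that (iii) makes $f$ an open map off $\mathring{S}_{\ast}$, so that $f$ carries each $(m-1)$-spherical shell onto a shell and the fibre maps $\nu(\cdot,s)\colon\mathbb{S}^{m-1}\to\mathbb{S}^{m-1}$ are surjective. Given a target $y\in\Gamma_{C}$ coded by $(\xi_{\ast},s_{\ast})$, I would build $x_{0}$ coded by $(\xi_{0},s_{0})$ whose forward orbit returns arbitrarily close to $y$: take $s_{0}$ to be the concatenation of ever-longer prefixes of $s_{\ast}$, so that $\sigma^{n_{k}}(s_{0})\to s_{\ast}$, and use the fibre surjectivity to choose the ``filler'' bits of $s_{0}$ and the initial fibre $\xi_{0}$ so that the angular coordinate at the times $n_{k}$ is steered to $\xi_{\ast}$. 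Then $y\in\omega(x_{0})$; in particular $\hat{f}$ (hence $f|_{\Gamma_{C}}$) is topologically transitive and has a point with dense forward orbit. Consequently $\Gamma_{C}\subset\overline{\bigcup_{x}\omega(x)}$, so $\Gamma=\overline{\bigcup_{x}\omega(x)}$, and $\Gamma$ is the minimal global attracting set.

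Everything inherited from Theorem~\ref{thm3.1} is routine, so the main obstacle is the last step: showing that the sphere-fibred skew product over the full shift is topologically transitive, i.e. that the angular coordinate can be steered onto an arbitrary target at the return times. This is exactly where the openness/invertibility supplied by (iii) is indispensable, and a careful but standard ``prefix-concatenation plus angular steering'' argument (or an appeal to transitivity criteria for skew products over a mixing base) is required. A secondary nuisance, as already in Theorem~\ref{thm3.1}, is the bookkeeping at the shell boundaries $\{|x|=\alpha(u)\}$, $\{|x|=\beta(u)\}$, $\{|x|=\zeta(u)\}$ and at the pinch point $0$: one must check that the cells $\Sigma_{s}$ genuinely form a neighbourhood basis of the points of $\Gamma_{C}$, so that returning into every cell containing $y$ really does force $y\in\omega(x_{0})$.
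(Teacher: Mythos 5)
The recycled portion of your argument is exactly what the paper does: its proof of Theorem \ref{thm3.2} is an explicit sketch repeating the shell pre-imaging construction, the Cantor-of-shells representation, and the radial-derivative estimate $n^{-1}\sum_{k}\log\left\vert \partial_{r}\left\vert f\right\vert (f^{k-1}(x))\right\vert \geq\log\lambda>0$ from Theorem \ref{thm3.1}, so that part of your proposal is fine. One small slip: in this setting the Cantor shells are contained in $\{x:\alpha_{0}(x/\left\vert x\right\vert )\leq\left\vert x\right\vert \leq\zeta(x/\left\vert x\right\vert )\}$, hence bounded away from the origin, and the paper gets $\Gamma_{C}\cong\mathbb{S}^{m-1}\times C$ with $\{0\}$ attached as a separate wedge point; the pinched cone $\left(\mathbb{S}^{m-1}\times C\right)/\left(\mathbb{S}^{m-1}\times0\right)$ you quote is the Theorem \ref{thm3.1} attractor, not this one.

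The genuine problem is your minimality step, which is also the one place where you go beyond the paper (the paper's sketch never argues minimality at all; it simply asserts it and comments on it in the remark following Corollary \ref{cor3.2.2}). Your reduction of minimality to $\Gamma\subset\overline{\bigcup_{x}\omega(x)}$ is sound, but the proposed way of proving $\Gamma_{C}\subset\overline{\bigcup_{x}\omega(x)}$ --- surjectivity of the fibre maps $\nu(\cdot,s)$ plus ``angular steering'' to get transitivity of the skew product --- does not work and cannot be repaired from hypotheses (i)--(iv) alone. Those hypotheses constrain only the radial behaviour of $f$; the angular dynamics is essentially arbitrary. For instance, take $f(x)=g(\left\vert x\right\vert )\,\Phi(x/\left\vert x\right\vert )$ with $g$ a suitable radial profile and $\Phi$ a fixed North--South $C^{1}$ diffeomorphism of $\mathbb{S}^{m-1}$: every fibre map is surjective, yet the angular coordinate of every orbit in $\Gamma_{C}$ either stays at the repelling pole or converges to the attracting pole, so no orbit returns near points over other directions, every $\omega$-limit set lies over the two invariant rays, and $\overline{\bigcup_{x}\omega(x)}$ is a proper closed invariant globally attracting subset of $\Gamma$. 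Thus fibre surjectivity gives no steering, your claimed transitivity of $\hat{f}$ is false in general, and the minimality (and a fortiori any transitivity) of $\Gamma$ genuinely requires extra hypotheses controlling the angular dynamics --- which is precisely the role of the invariant curve cycle $\{\gamma_{1},\ldots,\gamma_{k}\}$ added in Corollary \ref{cor3.2.2}. So the ``careful but standard prefix-concatenation plus angular steering'' you defer to is not a routine gap-filling exercise; it is the missing (indeed unavailable) ingredient.
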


\begin{proof}
As the proof of this result is completely analogous to that of Theorem
\ref{thm3.1}, we need only sketch the argument. Again we immediately deduce
from the hypotheses and construction that $\Gamma$ is a compact global
attracting set for $f$. Moreover, excepting the origin, it is a two-component
Cantor set (generated by $(m-1)$-spherical shells rather than intervals)
contained in $\{x\in\mathbb{R}^{m}:\alpha_{0}(x/\left\vert x\right\vert
)\leq\left\vert x\right\vert \leq\zeta(x/\left\vert x\right\vert )\}$.
Consequently, it must be a fractal set. More precisely, we see from the
properties (in particular (iv)) of the map that we have%
\[
f^{-1}\left(  S_{\ast}\right)  =S_{0}\cup S_{1},
\]
where the union is disjoint and both $S_{0}$ and $S_{1}$ are open
$(m-1)$-spherical shells such that $S_{i}\subset\mathring{\Sigma}_{i}$,
$i=0,1$, where%
\[
\Sigma_{0}:=\{0\}\cup\{x\in\mathbb{R}^{m}:\alpha_{0}\left(  x/\left\vert
x\right\vert \right)  \leq\left\vert x\right\vert \leq\alpha\left(
x/\left\vert x\right\vert \right)  \}\text{ and }\Sigma_{1}:=\{x\in
\mathbb{R}^{m}:\beta\left(  x/\left\vert x\right\vert \right)  \leq\left\vert
x\right\vert \leq\zeta\left(  x/\left\vert x\right\vert \right)  \},
\]
and we note that we have the partition
\[
\bar{D}=\Sigma_{0}\cup S_{\ast}\cup\Sigma_{1}.
\]
Repeating the pre-imaging operation on the open $(m-1)$-spherical shells, we
obtain the partitions of open cells%
\[
f^{-1}\left(  S_{0}\right)  =S_{00}\cup S_{01}\text{ and }f^{-1}\left(
S_{1}\right)  =S_{10}\cup S_{11},
\]
where $S_{0i},S_{1i}\subset\mathring{\Sigma}_{0i}$, $i=0,1$, and the
continuing, we obtain in a manner quite like that in the proof of Theorem
\ref{thm3.1} the Cantor set representation%
\[
\Gamma_{C}={\displaystyle\bigvee\nolimits_{s\in2^{\mathbb{N}}}}\Sigma
_{s}\subset\bar{D}.
\]
Whence, by again sliding the shell boundaries along rays, we see that
$\Gamma_{C}$ is homeomorphic and actually $C^{1}$ diffeomorphic with
\begin{equation}
\Gamma_{C}\cong\mathbb{S}^{m-1}\times C,
\end{equation}
where $\cong$ denotes homeomorphic and $C$ is a standard two-component Cantor
set on the unit interval $[0,1]$. This conclusively shows that the attracting
set $\{0\}\cup$ $\Gamma_{C}$ is fractal. The sensitive dependence is an
immediate consequence of the construction of $\Gamma_{C}$ and (iv). Indeed, we
compute that for all $x\in D$
\begin{align*}
n^{-1}\log\left\Vert (f^{n})^{\prime}(x)\right\Vert  &  =n^{-1}%
{\displaystyle\sum\nolimits_{k=1}^{n}}\log\left\Vert f^{\prime}\left(
f^{k-1}(x)\right)  \right\Vert \\
&  \geq n^{-1}{\displaystyle\sum\nolimits_{k=1}^{n}}\log\left\vert
\partial_{r}\left\vert f\right\vert (f^{k-1}(x))\right\vert \geq\log\lambda>0,
\end{align*}
so $\lim\inf n^{-1}\log\left\Vert (f^{n})^{\prime}(x)\right\Vert >0$, which
implies sensitive dependence, and the proof is complete.
\end{proof}

The approximate value for the Hausdorff dimension (\ref{eq5}) also applies
here, and there are rather obvious analogs (where the origin is a sink rather
than a source) of Corollaries \ref{cor3.1} and \ref{cor3.2} for Theorem
\ref{thm3.2}. We only state these results, since their proofs are essentially
the same \emph{mutatis mutandis} as those given for their analogs.

\begin{corollary}
\label{cor3.2.1} The hypotheses of Theorem \ref{thm3.2} implies that $f$ is
conjugate on $\Gamma$ to a map of the form
\[
\hat{f}:\left[  \{0\}\vee\left(  \mathbb{S}^{m-1}\times2^{\mathbb{N}}\right)
\right]  /\left(  \mathbb{S}^{m-1}\times0\right)  \rightarrow\left[
\{0\}\vee\left(  \mathbb{S}^{m-1}\times2^{\mathbb{N}}\right)  \right]
/\left(  \mathbb{S}^{m-1}\times0\right)  ,
\]
where
\[
\hat{f}(0):=0\text{ and }\hat{f}(x,s):=\left(  \nu(x,s),\sigma(s)\right)
\text{ for }x\neq0,
\]
$\sigma$ is the shift map $($with $\sigma(.a_{1}a_{2}a_{3}\ldots)=(.a_{2}%
a_{3}a_{4}\ldots))$ and $\nu$ is a continuous map.
\end{corollary}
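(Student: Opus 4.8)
The plan is to upgrade the set-level identification $\Gamma_C\cong\mathbb{S}^{m-1}\times C$ obtained in the proof of Theorem~\ref{thm3.2} into a topological conjugacy, in exactly the way Corollary~\ref{cor3.1} was extracted from Theorem~\ref{thm3.1}. First I would fix the standard identification of the two-component Cantor set $C\subset[0,1]$ with $2^{\mathbb{N}}$ equipped with the metric $d_B$, arranged so that the $n$th binary digit of $s$ records which of the two ``child'' shells a point falls into at the $n$th stage of the recursive construction $\bar D=\Sigma_0\cup S_\ast\cup\Sigma_1$, $f^{-1}(S_w)=S_{w0}\cup S_{w1}$ with $S_{w0},S_{w1}\subset\mathring{\Sigma}_{w0}$. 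Since property~(iv) forces $|\partial_r\left\vert f\right\vert(x)|\ge\lambda>\mathfrak{M}/\mathfrak{m}>1$ on the two branch regions, every nested sequence $\Sigma_{a_1}\supset\Sigma_{a_1a_2}\supset\cdots$ has intersection equal to a single shell through the corresponding point, so the coding map $h:\Gamma\to[\{0\}\vee(\mathbb{S}^{m-1}\times 2^{\mathbb{N}})]/(\mathbb{S}^{m-1}\times 0)$ defined by
\[
h(0):=0,\qquad h(x):=\bigl(x/\left\vert x\right\vert,\ s(x)\bigr)\quad(x\in\Gamma_C),
\]
is a well-defined bijection. That $h$ and $h^{-1}$ are continuous follows from the ``sliding the shell boundaries along rays'' diffeomorphism already used in the proof of Theorem~\ref{thm3.2}; the collapse of $\mathbb{S}^{m-1}\times 0$ is precisely the quotient identifying the innermost shell of $\Gamma_C$ with a single point, consistent with $f^{-1}(\{0\})=\{0\}$ and $\left\Vert f^{\prime}(0)\right\Vert<1$.

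Next I would check the intertwining $h\circ f=\hat f\circ h$ componentwise. At the origin there is nothing to do beyond $f(0)=0\Rightarrow\hat f(0)=0$. For $x\in\Gamma_C$ lying in the shell coded by $s=.a_1a_2a_3\ldots$, the recursive pre-image identities give, at every finite truncation, $f(\Sigma_{.a_1a_2\ldots a_n})=\Sigma_{.a_2\ldots a_n}$, so $f(x)$ lies in the shell coded by $\sigma(s)=.a_2a_3a_4\ldots$; hence the Cantor coordinate of $h(f(x))$ is $\sigma(s(x))$. The angular coordinate then defines $\nu$: writing a point of $\Gamma_C$ as $h^{-1}(u,s)$ with $u\in\mathbb{S}^{m-1}$ and $s\in 2^{\mathbb{N}}$, set $\nu(u,s):=f(h^{-1}(u,s))/\left\vert f(h^{-1}(u,s))\right\vert$. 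This is well defined because $f^{-1}(\{0\})=\{0\}$ keeps $f\neq 0$ on $\Gamma_C$, and it is continuous since $f$ is $C^1$ and $h$, $h^{-1}$ are continuous. Thus $\hat f(u,s)=(\nu(u,s),\sigma(s))$, giving the asserted conjugacy.

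The step I expect to be the main obstacle is the symbolic bookkeeping behind ``$f$ acts as the one-sided shift'': one must verify that the two branches $S_0,S_1$ of $f^{-1}(S_\ast)$, and inductively of $f^{-1}(S_w)$, can be indexed consistently and uniformly over $\mathbb{S}^{m-1}$ so that $f$ deletes exactly the first symbol, and that the angular distortion induced by $f$ does not scramble the radial (Cantor) coordinate --- equivalently, that $f$ carries the foliation of $\bar D$ by the shell boundaries to itself up to the radial rescaling defining $h$. This is the standard horseshoe-type symbolic-dynamics argument (cf.~\cite{Shub}), here complicated only by the spherical-shell geometry and the pinch at the origin; properties~(ii)--(iv) of Theorem~\ref{thm3.2}, in particular the gap condition $0<\beta(u)-\alpha(u)<\zeta(u)$ and the uniform radial-derivative bounds, are exactly what produce the clean full $2$-shift structure, so the argument of Corollary~\ref{cor3.1} transfers \emph{mutatis mutandis}.
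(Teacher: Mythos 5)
Your overall strategy is exactly the one the paper intends: the paper offers no written proof of this corollary, deferring to Corollary \ref{cor3.1}, whose own justification is the standard symbolic-dynamics (itinerary) argument that you spell out --- code each point of $\Gamma_C$ by the nested sequence of shells it lies in, use radial projection for the angular coordinate, check that $f$ deletes the first symbol, and define $\nu$ as the angular component of $f$ under this identification, with continuity coming from the ``sliding along rays'' identification in the proof of Theorem \ref{thm3.2}. That part of your proposal is sound and matches the paper's (sketched) route, including your closing remarks on the branch-indexing bookkeeping.

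The one genuine defect is your treatment of the quotient by $\mathbb{S}^{m-1}\times 0$. In the setting of Theorem \ref{thm3.2} the Cantor piece satisfies $\Gamma_C\cong\mathbb{S}^{m-1}\times C$ with no pinching (this is stated explicitly in the paper's proof of Theorem \ref{thm3.2}): every fiber $\Sigma_s$, including the innermost one, is a genuine $(m-1)$-dimensional shell contained in $\{x:\alpha_0(x/\left\vert x\right\vert)\leq\left\vert x\right\vert\leq\zeta(x/\left\vert x\right\vert)\}$, and the origin is an isolated point of $\Gamma$, separated from $\Gamma_C$ by the basin $\mathcal{B}(0)$. Hence your assertion that the collapse of $\mathbb{S}^{m-1}\times 0$ ``identifies the innermost shell of $\Gamma_C$ with a single point'' does not describe the geometry of this theorem, and it is incompatible with your simultaneous claim that $h$ is a bijection: a map into the quotient that is constant on a nondegenerate shell cannot be injective. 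The pinch is real only in Theorem \ref{thm3.1}, where the nested innermost components shrink to the origin; here the factor $/\left(\mathbb{S}^{m-1}\times0\right)$ is an artifact carried over from Corollary \ref{cor3.1}, and what your construction actually establishes (and what should be asserted) is a conjugacy of $f|_{\Gamma}$ with $\hat f$ on $\{0\}\vee\left(\mathbb{S}^{m-1}\times2^{\mathbb{N}}\right)$ with no identification made on the Cantor factor. That is the correct and easy repair, but you cannot justify the quotient by appealing to $f^{-1}(\{0\})=\{0\}$ and $\left\Vert f^{\prime}(0)\right\Vert<1$; as written, the bijectivity step fails.
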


\begin{corollary}
\label{cor3.2.2} Suppose in addition to the hypotheses of Theorem
\ref{thm3.2}, $f$ satisfies the following property: There exists a finite set
of $C^{1}$ curves $\{\gamma_{1},\gamma_{2},\ldots,\gamma_{k}\}\subset\bar{D}$
such that:

\begin{itemize}
\item[(a)] Each curve begins at $x=0$ and ends at a distinct point of the
boundary $\partial\bar{D}=\{x\in\mathbb{R}^{m}:x=\zeta\left(  x/\left\vert
x\right\vert \right)  \}$ of $\bar{D}.$

\item[(b)] The curves are all transverse to $\partial S_{\ast}=$
$\{x\in\mathbb{R}^{m}:x=\alpha\left(  x/\left\vert x\right\vert \right)
\}\cup\{x\in\mathbb{R}^{m}:x=\beta\left(  x/\left\vert x\right\vert \right)
\}.$

\item[(c)] $f(\gamma_{j})=\gamma_{j+1}$, $1\leq j\leq k-1$, and $f(\gamma
_{k})=\gamma_{1}$.

\item[(d)] The set $E:=\gamma_{1}\cup\gamma_{2}\cup\cdots\cup\gamma_{k}$ is
conically attracting.
\end{itemize}

Then
\[
\mathfrak{A}:=\Gamma\cap E,
\]
where $\Gamma$ is as in (\ref{e7}), is a chaotic strange minimal global attractor.
\end{corollary}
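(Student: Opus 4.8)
The plan is to mirror the proof of Corollary \ref{cor3.2} essentially verbatim, adjusting only for the presence of the origin as a sink. First I would invoke Theorem \ref{thm3.2} together with hypothesis (d) to conclude immediately that $\mathfrak{A}=\Gamma\cap E$ is a compact semichaotic strange global attractor: compactness and the measure-zero/fractal properties are inherited from $\Gamma$, the sensitive dependence from condition (iv) exactly as in the displayed Lyapunov estimate, and the conical attracting hypothesis (d) supplies the required open neighborhood $W$ witnessing that $\mathfrak{A}$ attracts. So it remains only to establish that $g:=f|_{\mathfrak{A}}$ is chaotic in the sense of Definition \ref{defCh}, i.e. topologically transitive with dense periodic points, and that $\mathfrak{A}$ is minimal.

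For transitivity and density of periodic points I would transport the problem to the symbolic model furnished by Corollary \ref{cor3.2.1}, under which $g$ is conjugate to $\hat f(x,s)=(\nu(x,s),\sigma(s))$ on the Cantor cone $[\{0\}\vee(\mathbb{S}^{m-1}\times 2^{\mathbb{N}})]/(\mathbb{S}^{m-1}\times 0)$, with the curves $\gamma_1,\dots,\gamma_k$ cyclically permuted by $g$ as in (c). The key observation is that each $\gamma_i$ is transverse to $\partial S_\ast$ (hypothesis (b)) and runs from the origin to $\partial\bar D$ (hypothesis (a)), so it meets every nested spherical shell $\Sigma_s$ in exactly one point; hence a point of $\mathfrak{A}$ on $\gamma_i$ is coded by a pair $(i,s)$ with $s\in 2^{\mathbb{N}}$ and the origin. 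Given $x\leftrightarrow(i,s)$, $\tilde x\leftrightarrow(j,\tilde s)$ and a neighborhood $U$ of $x$, I would choose the least $q$ with $g^q(\gamma_i)=\gamma_j$, note $g^{lk+q}(\gamma_i)=\gamma_j$ for all $l$ by (c), and pick $l$ so large that $\breve s:=.a_1\cdots a_{lk+q}\tilde a_1\tilde a_2\cdots$ lies within $d_B$-distance $\epsilon$ of $s$; then $\breve x:=\gamma_i\cap\Sigma_{\breve s}$ lies in $U$ for $\epsilon$ small, and $\sigma^{lk+q}(\breve s)=\tilde s$ gives $g^{lk+q}(\breve x)=\tilde x$, yielding transitivity. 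Density of periodic points is the same argument with the periodic coding $\overline{.a_1\cdots a_{lk}}$, which satisfies $g^{lk}(\bar x)=\bar x$ for $\bar x:=\gamma_i\cap\Sigma_{\overline{.a_1\cdots a_{lk}}}$.

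For minimality (property (A3) of Definition \ref{defA}), which is the one genuinely new point beyond Corollary \ref{cor3.2}, I would argue that any nonempty closed invariant $\mathfrak{A}'\subset\mathfrak{A}$ satisfying (A1)--(A2) must be all of $\mathfrak{A}$: since $g$ is topologically transitive and $\mathfrak{A}$ has no isolated points except possibly the origin, a transitive orbit is dense, so the closure of any forward orbit is the whole space; as $\mathfrak{A}'$ is closed and forward invariant it must contain such a dense orbit once it meets its closure, forcing $\mathfrak{A}'=\mathfrak{A}$. (The origin is handled separately: it is the image of no point of $\mathfrak{A}\setminus\{0\}$ since $f^{-1}(\{0\})=\{0\}$ by (i), and it lies in the closure of the other points of $\mathfrak{A}$, so it cannot be removed either.) Finally, assembling (SCSA1)--(SCSA2) from Theorem \ref{thm3.2} with (CSA3) just proved, and appending minimality, gives that $\mathfrak{A}$ is a chaotic strange minimal global attractor, completing the proof.

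The main obstacle I anticipate is the minimality verification: one must be careful that removing the origin (which is a sink, unlike in Theorem \ref{thm3.1}) does not produce a smaller invariant attracting set, and conversely that no proper closed invariant piece of the Cantor-cone part can attract its own neighborhood --- here the transitivity of $g$ on $\mathfrak{A}$ is exactly what rules this out, so the argument is clean once transitivity is in hand. The symbolic bookkeeping (that $\gamma_i$ meets each $\Sigma_s$ in a single point, using transversality (b)) is the only other place demanding a little care, but it is routine given the Cantor-cone structure established in the proof of Theorem \ref{thm3.2}.
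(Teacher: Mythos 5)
Your overall strategy coincides with the paper's: the authors give no separate proof here, stating only that it is the \emph{mutatis mutandis} repetition of Corollary \ref{cor3.2}, and your symbolic argument for topological transitivity and density of periodic points (coding points of $\gamma_i\cap\Gamma$ by shells $\Sigma_s$, concatenating $.a_1\cdots a_{lk+q}\tilde s$, respectively periodizing $\overline{.a_1\cdots a_{lk}}$) is exactly that repetition and is fine. The place where your write-up goes wrong is the minimality paragraph, on two concrete points. First, you assert that the origin ``lies in the closure of the other points of $\mathfrak{A}$.'' That is true in the setting of Theorem \ref{thm3.1}, where the Cantor cone is pinched at $0$, but it is false under the hypotheses of Theorem \ref{thm3.2}: there $\Gamma_C$ is contained in the shell $\{x:\alpha_0(x/\left\vert x\right\vert)\leq\left\vert x\right\vert\leq\zeta(x/\left\vert x\right\vert)\}$ with $\alpha_0>0$ on the compact sphere, so $\Gamma_C\cap E$ is bounded away from the origin and $\Gamma=\{0\}\vee\Gamma_C$ is a disjoint wedge, not a pinched cone. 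Second, the inference ``$g$ is topologically transitive, hence a transitive orbit is dense, hence the closure of any forward orbit is the whole space'' is not valid: transitivity does not make every orbit dense, and indeed here it cannot, since $0$ is a fixed point of $\mathfrak{A}$ whose orbit closure is $\{0\}$. Worse, because the origin is a sink with basin $\mathcal{B}(0)$, the singleton $\{0\}$ is a proper closed invariant subset of $\mathfrak{A}$ satisfying (A1)--(A2), so $\mathfrak{A}$ is \emph{not} minimal in the literal sense of (A3) in Definition \ref{defA}; the word ``minimal'' in the statement (as in Theorem \ref{thm3.2} and the remark on Milnor attractors following these corollaries) must be read as minimality among \emph{global} attracting sets, and your argument does not engage that distinction.

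The repair is short and worth making explicit. Let $A'$ be any closed, invariant set that attracts every point of $\mathbb{R}^m$ (equivalently of $\bar D$). Since $0$ is fixed and $A'$ is closed, $0\in A'$; since every periodic point of $g$ has a finite orbit whose distance to $A'$ tends to $0$, some iterate, and hence by invariance and periodicity the point itself, lies in $A'$; by the density of $Per(g)$ in $\Gamma_C\cap E$ (which you have already proved) and closedness of $A'$, it follows that $\Gamma_C\cap E\subset A'$, so $\mathfrak{A}\subset A'$. Conversely $\{0\}$ alone does not attract the invariant set $\Gamma_C\cap E$, which stays a positive distance from the origin, and $\Gamma_C\cap E$ alone does not attract points of $\mathcal{B}(0)$, so neither piece can be dropped. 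This gives minimality of $\mathfrak{A}$ as a global attractor; with that substitution, the rest of your proposal matches the intended proof.
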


A few remarks are in order before we move on to a discussion of another - very
different - type of strange attractor. In Theorem \ref{thm3.2}, the origin is
actually a global metric attractor (see, \emph{e.g}. Milnor \cite{Mil1,Mil2})
because $\Gamma_{C}$ has $m$-dimensional Lebesgue measure zero, but the
minimal global attractor is indeed $\Gamma$. The basin of attraction is
riddled in a measure zero sense (\emph{c.f}. \cite{Caz}). In applications, the
model maps frequently leave all of the coordinate axes invariant (such as in
\cite{CB,Has,JB1,JB2,JF,Seln,Sum}) in which case there are obvious versions of
the above strange attractor results for maps restricted to $x_{1},\ldots
,x_{m}\geq0$, for example.

\section{Multihorseshoe (Trellis) Strange Attractors}

In this section we describe another type of attractor generated by one or more
horseshoes associated to hyperbolic fixed or periodic points. These attractors
apparently were first studied in reasonable detail (for planar maps with an
emphasis on characterizing the stable-unstable manifold connections of
neighboring horseshoes) by Easton \cite{Easton}, who seems also to have coined
the name \textquotedblleft trellis\textquotedblright\ to describe their
structure. They can often be found in the dynamics of AZ maps, but they may
occur for more general smooth maps as well. It is convenient to introduce the
following concept.

\begin{definition}
\label{defAH} Let $f:X\rightarrow X$ be a $C^{1}$ self map of the
$m$-dimensional $C^{1}$ manifold $X$. A subset $H$ of $X$ is an
\textbf{attracting 1}$\boldsymbol{\times}$\textbf{(m-1)-horseshoe(at p)} for
$f$ if the following properties obtain:

\begin{itemize}
\item[]
\begin{itemize}
\item[(AH1)] There is a $C^{1}$ diffeomorphism $\varphi:H^{\ast}\rightarrow H
$, where
\[
H^{\ast}:=C_{0}^{\ast}\cup Z^{\ast}\cup C_{1}^{\ast},
\]
with%
\begin{align*}
Z^{\ast}  &  :=\left\{  x\in\mathbb{R}^{m}:0\leq(x_{1}+2)^{2}+x_{2}^{2}%
+\cdots+x_{m-1}^{2}\leq4^{2},\;-1\leq x_{m}\leq9\right\}  ,\\
C_{0}^{\ast}  &  :=\left\{  x\in\mathbb{R}^{m}:0\leq(x_{1}+2)^{2}+x_{2}%
^{2}+\cdots+(x_{m}+1)\leq4^{2},\;x_{m}\leq-1\right\}  \text{ and}\\
C_{1}^{\ast}  &  :=\left\{  x\in\mathbb{R}^{m}:0\leq(x_{1}+2)^{2}+x_{2}%
^{2}+\cdots+(x_{m}-9)^{2}\leq4^{2},\;x_{m}\geq9\right\}  ;
\end{align*}
so that defining $H:=\varphi\left(  H^{\ast}\right)  $, $Z:=\varphi\left(
Z^{\ast}\right)  $ and $C_{k}:=\varphi\left(  C_{k}^{\ast}\right)  $, $k=0,1$,
we have the associated decomposition%
\[
H=C_{0}\cup Z\cup C_{1}.
\]
We note that $H^{\ast}$ has both a $C^{\infty}$1-dimensional foliation
$\mathcal{F}_{\mathrm{v}}^{\ast}$ comprised of leaves that are the
intersections with $H^{\ast}$ of the lines of the form $x_{1},\ldots,x_{m-1}$
all constant, and a $C^{\infty}(m-1)$-dimensional foliation $\mathcal{F}%
_{\mathrm{h}}^{\ast}$ consisting of the intersections with $H^{\ast}$ of the
hyperplanes $x_{m}=$ constant. These orthogonal foliations produce transverse
$C^{1}$ foliations of $H$ defined by $\mathcal{F}_{\mathrm{v}}:=\varphi\left(
\mathcal{F}_{\mathrm{v}}^{\ast}\right)  $ and $\mathcal{F}_{\mathrm{h}%
}:=\varphi\left(  \mathcal{F}_{\mathrm{h}}^{\ast}\right)  $.

\item[(AH2)] The map $f$ restricted to $H$ is injective, $f\left(  H\right)
\subset\mathring{H}$, and $f\left(  C_{k}\right)  \subset\mathring{C}_{0}$ for
$k=0,1$.

\item[(AH3)] The foliation $f\left(  \mathcal{F}_{\mathrm{v}}\right)  $ is
transverse to the foliation $\mathcal{F}_{\mathrm{h}}$ in $Z$.

\item[(AH4)] $f$ restricted to $C_{0}$ is a contraction mapping into
$\mathring{C}_{0}$, and the unique fixed point $q$ in $\mathring{C}_{0}$ is an
attractor with a basin of attraction containing $C_{0}$.

\item[(AH5)] There is a further decomposition of $Z^{\ast}$ defined as
\[
Z^{\ast}=S_{0}^{\ast}\cup S_{1/2}^{\ast}\cup S_{1}^{\ast},
\]
where $S_{0}^{\ast}:=\{x\in Z^{\ast}:-1\leq x_{m}\leq3\}$, $S_{1/2}^{\ast
}:=\{x\in Z^{\ast}:3\leq x_{m}\leq5\}$ and $S_{0}^{\ast}:=\{x\in Z^{\ast
}:5\leq x_{m}\leq9\}$, which naturally induces the decomposition of $Z$ given
by
\[
Z=S_{0}\cup S_{1/2}\cup S_{1},
\]
where $S_{r}:=\varphi\left(  S_{r}^{\ast}\right)  $, $r=0,1/2,1$. For this
decomposition, $f\left(  S_{1/2}\right)  \subset\mathring{C}_{1}$.

\item[(AH6)] There is a hyperbolic fixed point (saddle) $p\in f\left(
S_{0}\right)  \cap S_{0}$ , corresponding to $\varphi\left(  0\right)  ,$with
a $1-$dimensional unstable manifold $W^{u}(p)$ that is tangent at $p$ to the
image under $\varphi$ of the leaf of $\mathcal{F}_{\mathrm{v}}$ through $p$
and an $(m-1)-$dimensional stable manifold $W^{s}(p)$ that is tangent at $p$
to the image under $\varphi$ of the leaf of $\mathcal{F}_{\mathrm{h}}$ through
this point.

\item[(AH7)] $f$ restricted to $Z$ is a contracting map along the leaves of
$\mathcal{F}_{\mathrm{h}}$ with contraction coefficient $0<\lambda<1$, and an
expanding map along the leaves of $f\left(  \mathcal{F}_{\mathrm{v}}\right)  $
with expansion coefficient $1<\mu$.
\end{itemize}
\end{itemize}
\end{definition}

\noindent This rather verbose description of the criteria for the existence of
attracting horseshoes is actually quite easy to apply and even easier to
illustrate as shown in Fig. 1. With this nomenclature, we can now efficiently
proceed to one of our main results on strange attractors, the proof of which
includes a fairly straightforward extension of an argument used by Easton
\cite{Easton}.

\begin{theorem}
\label{thm4.1} Let $f:E\rightarrow E$ be a $C^{1}$ self-map of a connected
open subset of $\mathbb{R}^{m}$. If $f$ has an attracting $1\times(m-1)$
-horseshoe $H$ at $p\in E$, then%
\[
\mathfrak{A}:=\bigcap\nolimits_{n=1}^{\infty}f^{n}\left(  H\right)
=\overline{W^{u}(p)}%
\]
is a chaotic strange attractor of $f$ with a basin of attraction containing
$H$, and $\mathfrak{A}$ is homeomorphic to
\begin{equation}
\left(  K\times\lbrack0,1]\right)  /\left(  K\times\{0,1\}\right)  ,\label{e8}%
\end{equation}
where $K$ is a two-component Cantor space and the usual quotient topology is
used for the whole space.
\end{theorem}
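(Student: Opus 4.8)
The plan is to show that $\mathfrak{A}:=\bigcap_{n\ge 1}f^n(H)$ is nonempty, compact, invariant, minimal, fractal, and carries chaotic dynamics, while simultaneously identifying it with $\overline{W^u(p)}$ and with the quotient space in \eqref{e8}. First I would establish the basic topology: by (AH2) we have $f(H)\subset\mathring H$, so the sets $f^n(H)$ form a nested sequence of nonempty compacta, whence $\mathfrak{A}$ is a nonempty compact set, $f(\mathfrak{A})=\mathfrak{A}$ (so it is invariant in both directions since $f|_H$ is injective by (AH2)), and the open set $\mathring H$ is contained in the basin because any point of $H$ is eventually trapped and then attracted to $\mathfrak{A}$. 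Since $f(C_k)\subset\mathring C_0$ for $k=0,1$ by (AH2), and by (AH4) $f|_{C_0}$ contracts to the sink $q$, forward orbits that ever land in $C_0$ converge to $q$; this shows $q\in\mathfrak{A}$. The interesting part of $\mathfrak{A}$ lies where the orbit stays in $Z$, and this is where the horseshoe structure enters.

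Second, I would run the symbolic-dynamics construction in the chart. Working with $\varphi^{-1}$, (AH7) says $f$ contracts by $\lambda<1$ along the horizontal $(m-1)$-dimensional leaves $\mathcal F_{\mathrm h}$ and expands by $\mu>1$ along the (images of the) vertical $1$-dimensional leaves; (AH3) gives the transversality needed for $f(\mathcal F_{\mathrm v})$ to cross $\mathcal F_{\mathrm h}$ in the genuine horseshoe fashion; (AH5) identifies the ``escape'' slab $S_{1/2}$ which maps into $C_1$ and then (via (AH2), (AH4)) is funneled to $q$. Thus the points whose entire forward orbit remains in $Z\cup\{$return to $C_0$ only at $q\}$ are coded, in the standard way used for the Smale horseshoe (e.g.\ \cite{Shub}), by how many times they pass through $S_0$ versus $S_1$ before (possibly) escaping; iterating the pre-image decomposition of $Z$ into the two ``good'' sub-slabs $S_0,S_1$ exactly as in the Cantor constructions of Theorems \ref{thm3.1}--\ref{thm3.2} produces, in the contracting horizontal direction, a two-component Cantor space $K$ transverse to the one-dimensional unstable direction, while along each unstable leaf one gets a full interval $[0,1]$; the two ends $K\times\{0,1\}$ of these intervals are glued because they are absorbed into $C_0$ and hence collapse onto the single point $q$ (or onto the pinch forced by the horseshoe geometry at the two ``turns''). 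Sliding leaves along the foliations, just as the shell boundaries were slid along rays in Theorem \ref{thm3.1}, upgrades this bijection to a homeomorphism, giving \eqref{e8}; noninteger Hausdorff dimension then follows because, up to this homeomorphism, $\mathfrak A$ is (a quotient of) $\mathbb S^{0}$-many copies --- really $K$-many copies --- of an interval through a Cantor set $K$ with $0<\dim_H K<1$, exactly as in \eqref{eq5}, establishing (CSA2).

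Third, I would identify $\mathfrak A$ with $\overline{W^u(p)}$. By (AH6), $p$ is a saddle with one-dimensional $W^u(p)$ tangent to the vertical foliation; since $p\in f(S_0)\cap S_0$, the local unstable manifold is expanded by $f$ and stays in $H$, so $W^u(p)=\bigcup_n f^n(W^u_{\mathrm{loc}}(p))\subset\mathfrak A$, and since $\mathfrak A$ is closed, $\overline{W^u(p)}\subset\mathfrak A$. Conversely, any point of $\mathfrak A$ has a full backward orbit in $H$; by the contraction (AH7) along $\mathcal F_{\mathrm h}$, the backward orbit's horizontal coordinate is controlled and one shows (standard $\lambda$-lemma / inclination argument) that every point of $\mathfrak A$ is approximated by points on iterates of a vertical leaf through $p$, i.e.\ lies in $\overline{W^u(p)}$; hence equality. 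This identification also yields minimality (A3): any closed invariant proper subset of $\mathfrak A$ containing a point of the (dense-in-$\mathfrak A$) orbit of $W^u(p)$ must contain $\overline{W^u(p)}=\mathfrak A$, and the coding shows such a point exists in every relatively open subset.

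Finally, for (CSA3) I would transfer the dynamics to the model \eqref{e8}: as in Corollary \ref{cor3.1}, $f|_{\mathfrak A}$ is conjugate to a skew product over the one-sided (or two-sided, using injectivity of $f|_H$) shift $\sigma$ on $K$, with an expanding interval factor. Topological transitivity and density of periodic points for the shift are classical (and were re-derived by hand in the proof of Corollary \ref{cor3.2} via concatenation of finite codes), and the expanding one-dimensional factor does not destroy either property; by \cite{BBCDS} this already gives Devaney chaos, so $f|_{\mathfrak A}$ is chaotic in the sense of Definition \ref{defCh}, completing (CSA1)--(CSA3). The main obstacle I anticipate is the gluing step: carefully verifying that the two free ends $K\times\{0,1\}$ of the unstable intervals are genuinely identified --- this requires checking, via (AH5) and (AH4), that material reaching the ``tips'' of the folded horseshoe is exactly what gets sent through $S_{1/2}$ into $C_1$ and thence to $q$, so that the quotient in \eqref{e8} is the correct one and $\overline{W^u(p)}$ is not, say, an unpinched product $K\times[0,1]$. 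Making the leaf-sliding homeomorphism respect this collapse, and checking it is well defined at the pinch point, is the delicate part; everything else is a routine adaptation of the Cantor-set and symbolic-dynamics arguments already carried out for Theorems \ref{thm3.1} and \ref{thm3.2}.
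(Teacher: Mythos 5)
Your overall architecture parallels the paper's: nested images $f^n(H)$ give a compact attracting set with basin containing $H$, the transversality and contraction/expansion hypotheses (AH3), (AH7) give the Cantor-times-interval structure and hence fractality, symbolic coding over the two slabs $S_0,S_1$ gives the dynamics, and the quotient description \eqref{e8} is obtained from the horseshoe geometry. Indeed you are more detailed than the paper in two places it treats as obvious: the identification $\mathfrak{A}=\overline{W^{u}(p)}$ (which you sketch via an inclination-lemma argument) and the pinching of the ends $K\times\{0,1\}$.

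The genuine gap is in your final chaos step. You claim that $f|_{\mathfrak{A}}$, on the \emph{whole} attractor, is conjugate to a skew product over the shift on $K$ with an expanding interval factor, and that transitivity and density of periodic points survive, giving Devaney chaos on all of $\mathfrak{A}$. That cannot be made to work: $\mathfrak{A}=\bigcap_{n}f^{n}(H)$ contains the sink $q$ of (AH4) (since $q=f^{n}(q)\in f^{n}(H)$ for every $n$) together with the arcs of $W^{u}(p)$ that pass through $S_{1/2}$ into $C_{1}$ and then into $C_{0}$, accumulating on $q$. Take $U$ a relatively open neighborhood of $q$ in $\mathfrak{A}$ with $U\subset\mathring{C}_{0}$ and $V$ a relatively open neighborhood of $p$ disjoint from $C_{0}$; since $f(C_{0})\subset\mathring{C}_{0}$, every forward image $f^{k}(U)$ stays in $C_{0}$ and never meets $V$, so $f|_{\mathfrak{A}}$ is not topologically transitive, and since the only periodic point in $C_{0}\cup C_{1}$ is $q$, periodic points are not dense along the arcs of $W^{u}(p)$ absorbed by $C_{0}$. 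So your proposed conjugacy is false on that part of the attractor, and no argument can verify Definition \ref{defCh} for $f|_{\mathfrak{A}}$ itself. What the paper actually does is restrict to the invariant core $\Lambda:=\mathfrak{A}\cap\{x\in H:f^{n}(x)\in S_{0}\cup S_{1}\ \text{for all }n\}$, a product $K\times K$ of Cantor sets on which $f$ is conjugate to the full two-sided shift on $2^{\mathbb{Z}}$; transitivity, dense periodic points and the Lyapunov-type estimate $\liminf_{n}n^{-1}\log\left\Vert (f^{n})^{\prime}(x)\right\Vert>0$ from (AH6)--(AH7) are established there, i.e.\ chaos is proved on a closed invariant subset of $\mathfrak{A}$, in the spirit of Definitions \ref{defCAS} and \ref{defSCAS}, not for the full restriction. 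Your coding paragraph already contains the needed ingredients; you should state the conjugacy on $\Lambda$ (two-sided, using injectivity of $f|_{H}$) rather than on all of $\mathfrak{A}$. A side remark: your minimality argument via density of $W^{u}(p)$ inherits the same tension, since $\{q\}$ is itself a closed invariant attracting proper subset of $\mathfrak{A}$; the paper handles this point only by a brief appeal to indecomposability, so that step is delicate in both treatments.
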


\begin{proof}
We first verify that $\mathfrak{A}$ is an attracting set. If $x\in
H\smallsetminus\left(  S_{0}\cup S_{1}\right)  $, it follows from (AH2), (AH4)
and (AH5) that $f^{2}(x)\in\mathring{C}_{0}$. Consequently, in virtue of
(AH4), $f^{n}(x)\in\mathring{C}_{0}$ for all $n\geq2$ and $f^{n}(x)\rightarrow
q\in$ $\bigcap\nolimits_{n=1}^{\infty}f^{n}\left(  H\right)  =\overline
{W^{u}(p)}$ as $n\rightarrow\infty$. Moreover, we readily infer that the only
possible points $x$ in $H$ having iterates that do not converge to
$\mathfrak{A}$ are those such that $f^{n}(x)\in S_{0}\cup S_{1}$ for all
$n\geq0$. But for points such as these, it follows from (AH3), (AH6) and (AH7)
that $d\left(  f^{n}(x),\mathfrak{A}\right)  \leq c\lambda^{n}$ for all
$n\in\mathbb{N}$, so $f^{n}(x)\rightarrow\mathfrak{A}$. Thus, $\mathfrak{A}$
is an attracting set that includes all of $H$ in its basin of attraction. In
fact, the obvious fact that $\mathfrak{A}$ is indecomposable - which is probably most
self-evident from its characterization as the closure of the unstable manifold
of $p$ - shows that we have a minimal attracting set, which means that it is
an attractor.
To see that $\mathfrak{A}$ is fractal, one merely has to note that owing to
(AH3) and the geometry of $f(H),$ its intersection with $Z$ is homeomorphic
with $K\times\lbrack0,1]$, where $K$ is the two-component Cantor set in the
statement of the theorem.
As for the chaotic regimes of $f$ restricted to $\mathfrak{A}$, consider the
subset defined as
\[
\Lambda:=\mathfrak{A}\cap\left\{  x\in H:f^{n}(x)\in S_{0}\cup S_{1}\right\}
.
\]
This is just the standard product of Cantor sets $K\times K$, which is
$f$-invariant and on which it is well known (\emph{c.f}.
\cite{Arr,Gh,Rob,Shub,Wig} and Theorem \ref{thm3.1} and Corollary
\ref{cor3.1}) that $f$ is conjugate to the shift map $\sigma:2^{\mathbb{Z}%
}\rightarrow2^{\mathbb{Z}}$ defined as
\[
\sigma\left(  \{\cdots a_{-2}a_{-1}a_{0}.a_{1}a_{2}\cdots\}\right)  =\{\cdots
a_{-2}a_{-1}a_{0}a_{1.}a_{2}\cdots\}.
\]
This map is topologically transitive and the periodic points are dense, so it
follows that the map depends sensitively on initial points in $\Lambda$. The
sensitive dependence can also be established directly from the properties of
the attracting horseshoe; in fact, (AH6) and (AH7) implies that%
\[
\lim\inf{}_{n}n^{-1}\log\left\Vert (f^{n})^{\prime}(x)\right\Vert \geq\log\lambda>0
\]
for all $x\in\Lambda$. It remains only to prove the representation of the
attractor as the quotient space (\ref{e8}), which is obvious from the
definition of the attracting horseshoe, so the proof is complete.
\end{proof}

\noindent It is likely that rank-one theory could also be used to prove the
above theorem, but probably with considerably more effort than required for
our proof. In addition, it appears that our approach might lead to some
interesting generalizations that are beyond the scope of rank-one techniques.

\begin{figure}[ptb]
\begin{center}
\includegraphics[width=0.5\textwidth]{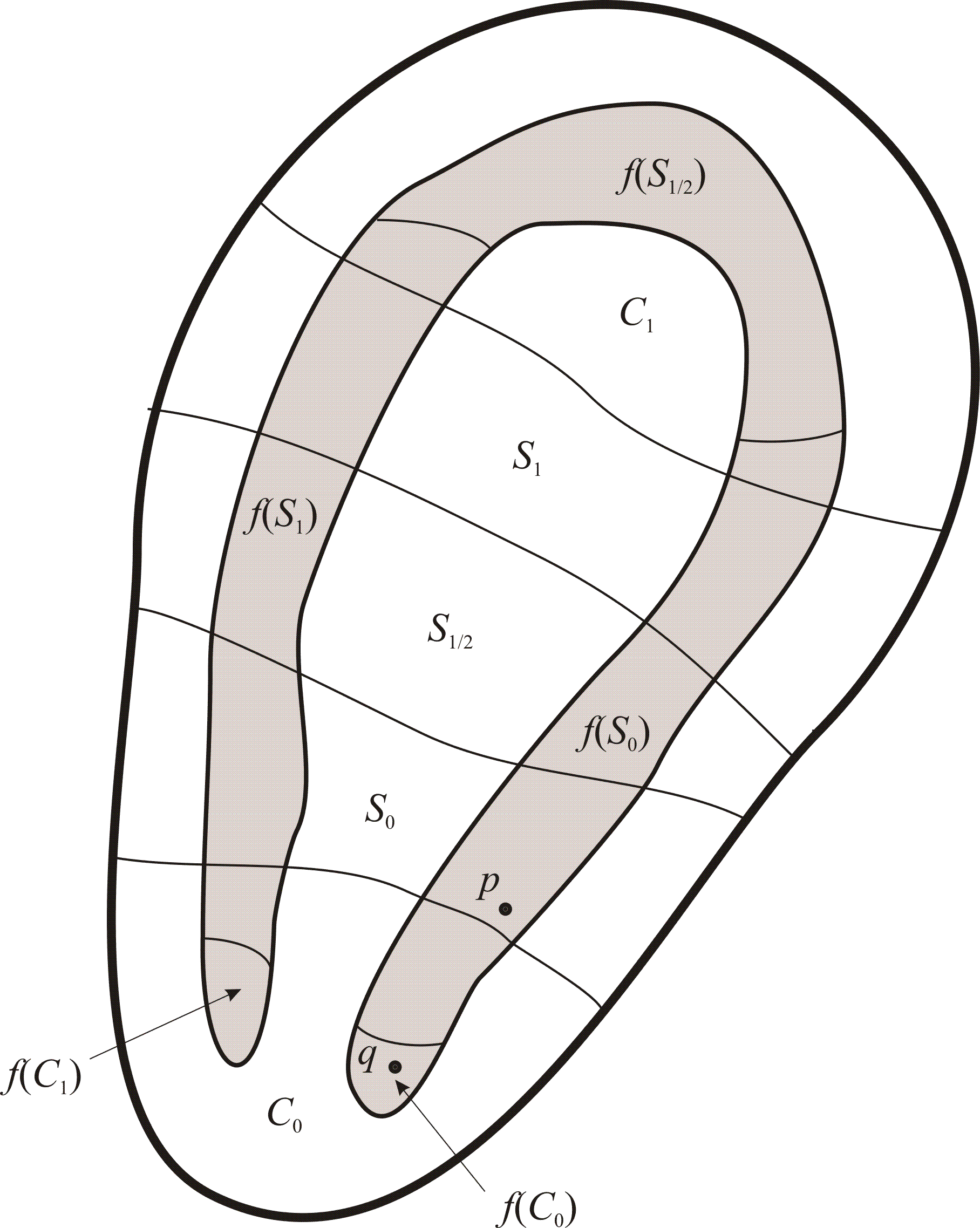}
\end{center}
\caption{An attracting horseshoe}%
\label{fig:H}%
\end{figure}

It is now an easy matter to generalize Theorem \ref{thm4.1} to the case of
multihorseshoe strange chaotic attractors (trellises), which tend to exemplify
both great complexity and aesthetically interesting patterns, which we shall
illustrate in the sequel. As the proof of the following results follows
directly from Theorem \ref{thm4.1}, we leave it to the reader.

\begin{theorem}
\label{thm4.2} If $f:E\rightarrow E$ a $C^{1}$ self-map of a connected open
subset of $\mathbb{R}^{m}$ that has a $k$-cycle of distinct points
$\{p_{1},\ldots,p_{k}\}$ with $k>1$ and $f^{k}$ has an attracting
$1\times(m-1)$ -horseshoe $H$ at one of the points $p$ in the cycle, then%
\[
\mathfrak{A}:=\overline{W^{u}(p)}\cup f\left(  \overline{W^{u}(p)}\right)
\cup\cdots\cup f^{k-1}\left(  \overline{W^{u}(p)}\right)
\]
is a chaotic strange attractor for $f$ with a basin of attraction containing
$H\cup f\left(  H\right)  \cup\cdots\cup f^{k-1}\left(  H\right)  .$
\end{theorem}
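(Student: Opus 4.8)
The plan is to bootstrap everything from Theorem \ref{thm4.1} by passing to the $k$-th iterate. Since $\{p_1,\ldots,p_k\}$ is a $k$-cycle of distinct points, we may take $p := p_1$ (relabeling so that $p$ is the point at which the hypothesis is stated), and by assumption $g := f^k$ has an attracting $1\times(m-1)$-horseshoe $H$ at $p$. Theorem \ref{thm4.1} applied to $g$ then yields immediately that
\[
\mathfrak{A}_0 := \bigcap\nolimits_{n=1}^{\infty} g^n(H) = \overline{W^u_g(p)}
\]
is a chaotic strange attractor for $g$ with basin containing $H$, homeomorphic to the quotient space in (\ref{e8}). The first routine observation is that, because $p$ is fixed by $g=f^k$, its unstable manifold for $g$ and for $f$ coincide as sets: $W^u_g(p) = W^u_f(p)$, since a point lies on the unstable manifold precisely when its backward orbit (under $g$, equivalently cofinally under $f$) converges to $p$. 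So $\mathfrak{A}_0 = \overline{W^u(p)}$ in the notation of the statement, and $\mathfrak{A} = \bigcup_{j=0}^{k-1} f^j(\mathfrak{A}_0)$.

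Next I would check the three defining properties of a chaotic strange attractor for $f$ on $\mathfrak{A}$. Invariance is clear: $f(\mathfrak{A}) = \bigcup_{j=1}^{k} f^j(\mathfrak{A}_0)$, and $f^k(\mathfrak{A}_0) = g(\mathfrak{A}_0) = \mathfrak{A}_0$ by invariance under $g$, so $f(\mathfrak{A}) = \mathfrak{A}$; closedness and compactness follow since each $f^j$ is continuous and $\mathfrak{A}_0$ is compact. For the basin/attracting property: given $x \in \bigcup_{j=0}^{k-1} f^j(H)$, write $x = f^j(y)$ with $y \in H$; by Theorem \ref{thm4.1}, $g^n(y) \to \mathfrak{A}_0$, hence $f^{nk+j}(y) = f^j(g^n(y)) \to f^j(\mathfrak{A}_0) \subset \mathfrak{A}$, and the intermediate iterates $f^{nk+j+i}(y)$, $0 \le i < k$, are handled by continuity of $f^i$ together with the fact that $f^i(\mathfrak{A}) \subset \mathfrak{A}$ — so $d(f^n(x),\mathfrak{A}) \to 0$. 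The fractal property (noninteger Hausdorff dimension) is inherited because $\mathfrak{A}_0 \subset \mathfrak{A}$ has noninteger dimension by Theorem \ref{thm4.1} and each $f^j$ is a $C^1$ diffeomorphism onto its image (bi-Lipschitz on compacta), so Hausdorff dimension is preserved; $\dim_H \mathfrak{A} = \dim_H \mathfrak{A}_0 \notin \mathbb{Z}$.

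The step requiring the most care is the chaotic dynamics of $f|_{\mathfrak{A}}$ — transitivity and density of periodic points — and in particular minimality (indecomposability), because a priori the pieces $f^j(\mathfrak{A}_0)$ could overlap or fail to "communicate" under $f$ in a way that breaks transitivity. The clean way around this is again symbolic: on the invariant Cantor-product set $\Lambda_0 \subset \mathfrak{A}_0$ from Theorem \ref{thm4.1}, $g|_{\Lambda_0}$ is conjugate to the full two-sided shift $\sigma$ on $2^{\mathbb{Z}}$; I would then show $f$ is conjugate on $\Lambda := \bigcup_{j=0}^{k-1} f^j(\Lambda_0)$ to a "skew shift" over $\mathbb{Z}/k\mathbb{Z}$, i.e. to the map $(a,j) \mapsto (\sigma' a,\, j+1 \bmod k)$ on $2^{\mathbb{Z}} \times \mathbb{Z}/k\mathbb{Z}$ where $\sigma'$ acts as $\sigma$ only on passing $j = k-1 \to 0$; transitivity and dense periodic points for this map follow from the corresponding properties of $\sigma$ by a direct concatenation argument exactly as in the proof of Corollary \ref{cor3.2}. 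Density of $\Lambda$-periodic points in $\mathfrak{A}$ then gives density of $f$-periodic points in $\mathfrak{A}$, transitivity on $\Lambda$ gives transitivity on $\mathfrak{A}$ (via $\overline{W^u(p)}$), and indecomposability of $\mathfrak{A}$ follows from that of $\mathfrak{A}_0 = \overline{W^u(p)}$ together with the fact that $f$ cyclically permutes the closed pieces — so $\mathfrak{A}$ is a genuine (minimal) attractor, completing the proof. I expect the main obstacle to be purely bookkeeping: making the skew-shift conjugacy and the cyclic-permutation minimality argument precise when the pieces $f^j(\mathfrak{A}_0)$ may intersect, but this is handled by working on the symbolic model rather than on the geometric pieces directly.
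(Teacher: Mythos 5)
Your proposal is correct and takes essentially the approach the paper intends: the paper gives no written argument for Theorem \ref{thm4.2} beyond the remark that it follows directly from Theorem \ref{thm4.1}, and your reduction to the iterate $g=f^{k}$ --- applying Theorem \ref{thm4.1} to the horseshoe of $g$ at $p$ and then transferring invariance, attraction, noninteger Hausdorff dimension and the chaotic dynamics to the cyclic union $\bigcup_{j=0}^{k-1}f^{j}\left(\overline{W^{u}(p)}\right)$ --- is precisely that omitted argument, carried out in more detail than the paper supplies.
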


We remark that it is not difficult to find an analog of (\ref{eq5}) to
approximate the Hausdorff dimension of a single attracting horseshoe, from
which one can deduce that the fractal dimension is just slightly larger than
one if the contraction constant $\lambda$ in (AH7) is very small. On the other
hand, even for relatively small contraction constants, interactions of the
horseshoes (via intersections of respective unstable with stable manifolds)in
trellises can produce strange attractors that appear to have fractal
dimensions that are nearly equal to two for planar maps, as we shall
illustrate in the simulation examples to be shown in the next section.

\section{Applications and Examples}

We shall now illustrate our strange attractor theorems via simulation. Our
examples are chosen from well established discrete dynamical models for
physical phenomena; in particular, those for predicting the evolution of
ecological systems. For ease and clarity of visualization, we restrict the
examples to maps of the plane. We begin our simulations with maps that have
radial attractors

\subsection{Radial attractor examples}

Our first example involves the map $f:\mathbb{R}^{2}\rightarrow\mathbb{R}^{2}$
defined as
\begin{equation}
f(x_{1},x_{2})=f(x_{1},x_{2};a):=ae^{-x_{1}^{2}-x_{2}^{2}}\left(  x_{1}%
\cos(2\pi\theta)-x_{2}\sin(2\pi\theta),x_{1}\cos(2\pi\theta)-x_{2}\sin
(2\pi\theta)\right)  ,\label{e10}%
\end{equation}
where $\theta$ is an irrational number (so that the rotational part is
ergodic), is meant as an application of Theorem \ref{thm3.1}, but note that
although this map is AZ, it is not EZ. This particular discrete dynamical
system is of a type that has proven quite successful in modeling the evolution
of several pioneer species cohabiting and competing in the same ecological
environment. Fairly thorough descriptions of the properties of pioneer and
climax species can be found in \cite{JEAA,Frank,JB1,JF,Seln,Sum}.

Examples of radial type attractors for (\ref{e10}) that emerge for increasing
values of $a$ for two different choices of $\theta$ (the golden mean
$\phi:=(1+5)/2$ and the base of the natural logarithm $e$) are shown in Figs.
2 and 3. Reading the panels from left to right in these representations, the
parameter $a$ starts at $2.7$ in the upper left-hand corner and increases by
increments of $0.3$ until it reaches $6.0$ in the lower right-hand corner

Note how in each case the attractor essentially begins as a single invariant
ellipse, then there is period-doubling to a two-cycle of ellipses that begins
a period-doubling cascade that eventually leads to full-blown chaotic
attractors when $a$ is sufficiently large (around $a=4.2$). For larger values
of $a$ there is a parameter window (approximately $4.6<a<5$ ) in which the
attractor appears to be regular, then for all larger values of the parameter
($a\geq5.1$), one sees chaotic strange attractors of the kind described in
Theorem \ref{thm3.1}.

\begin{figure}[ptb]
\hspace{-0.4in}\includegraphics[width=1.1\textwidth]{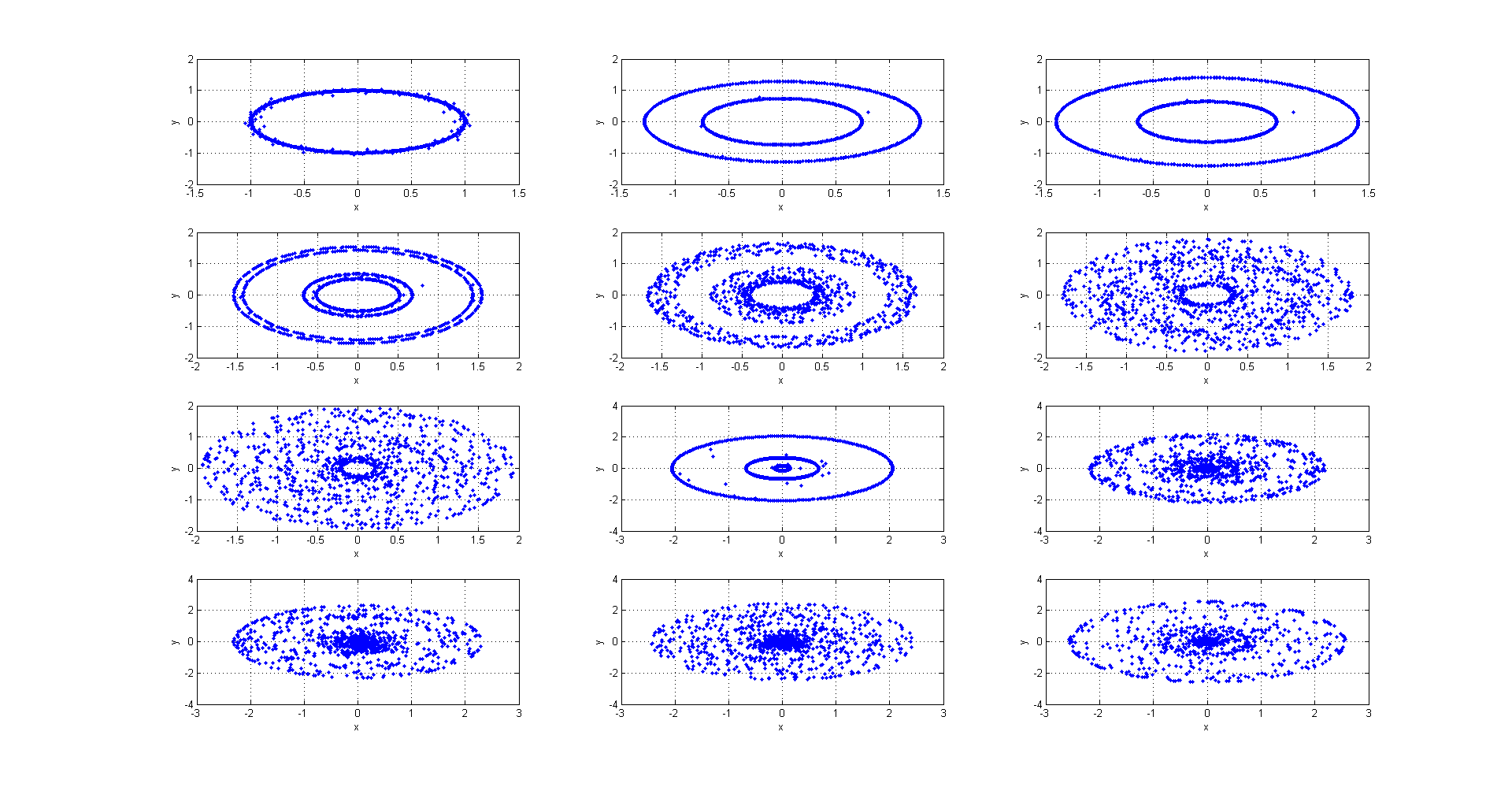}
\caption{Changes in radial attractors for (\ref{e10}) with
$\theta=\phi$ for $a=2.7,3,3.3,3.6,3.9,4.2,4.5,4.8,5.1,5.4,5.7,6.$}%
\label{fig:R1}%
\end{figure}

\begin{figure}[pbt]
\hspace{-0.4in}\includegraphics[width=1.1\textwidth]{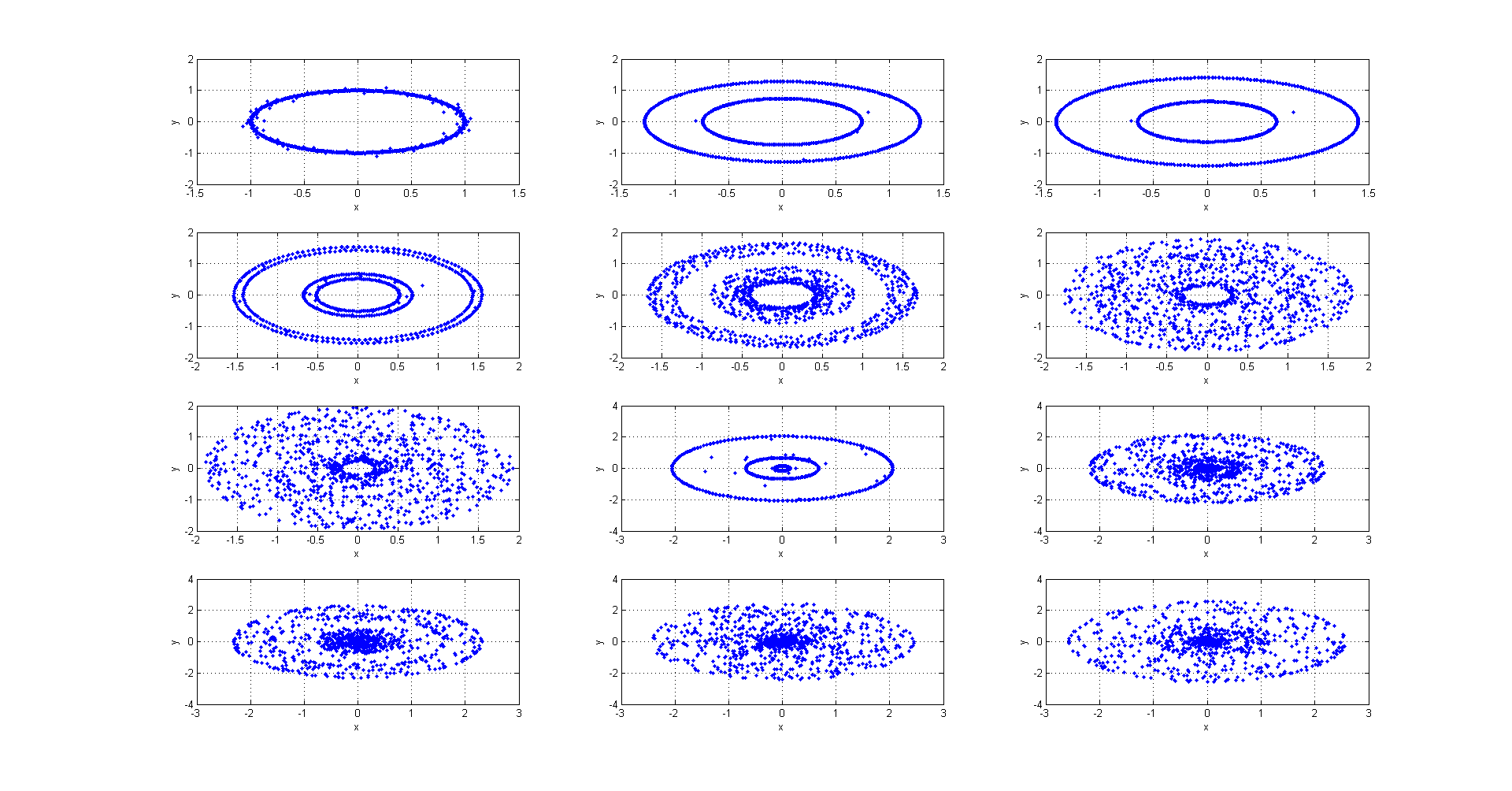}
\caption{Changes radial attractors for (\ref{e10}) with $\theta=e$
for $a=2.7,3,3.3,3.6,3.9,4.2,4.5,4.8,5.1,5.4,5.7,6.$}%
\label{fig:R2}%
\end{figure}

\subsection{Multihorseshoe attractor simulations}

Next, we illustrate the case in which there are multihorseshoe attractors for
different parameter values and note that the examples are drawn from models of
pioneer-climax ecological dynamics. For our first example of this kind, the
map $f:\mathbb{R}^{2}\rightarrow\mathbb{R}^{2}$ is
\begin{equation}
f(x_{1},x_{2})=f(x_{1},x_{2};a,b):=\left(  x_{1}e^{a-0.8x_{1}-0.2x_{2}}%
,x_{2}(0.2x_{1}+0.8x_{2})e^{b-0.2x_{1}-0.8x_{2}}\right)  .\label{e11}%
\end{equation}
Figure 4 shows the map (\ref{e11}) for $a=2.4$ and $b=2.5$ that follows
intervals of slightly smaller values for which $f$ has a $6$-cycle of sinks,
which develop nearby saddles and associated attracting horseshoes when $a$ and
$b$ are sufficiently large, as illustrated in Fig. 5 for $a=b=3$.

\begin{figure}[ptb]
\hspace{-0.4in}\includegraphics[width=1.1\textwidth]{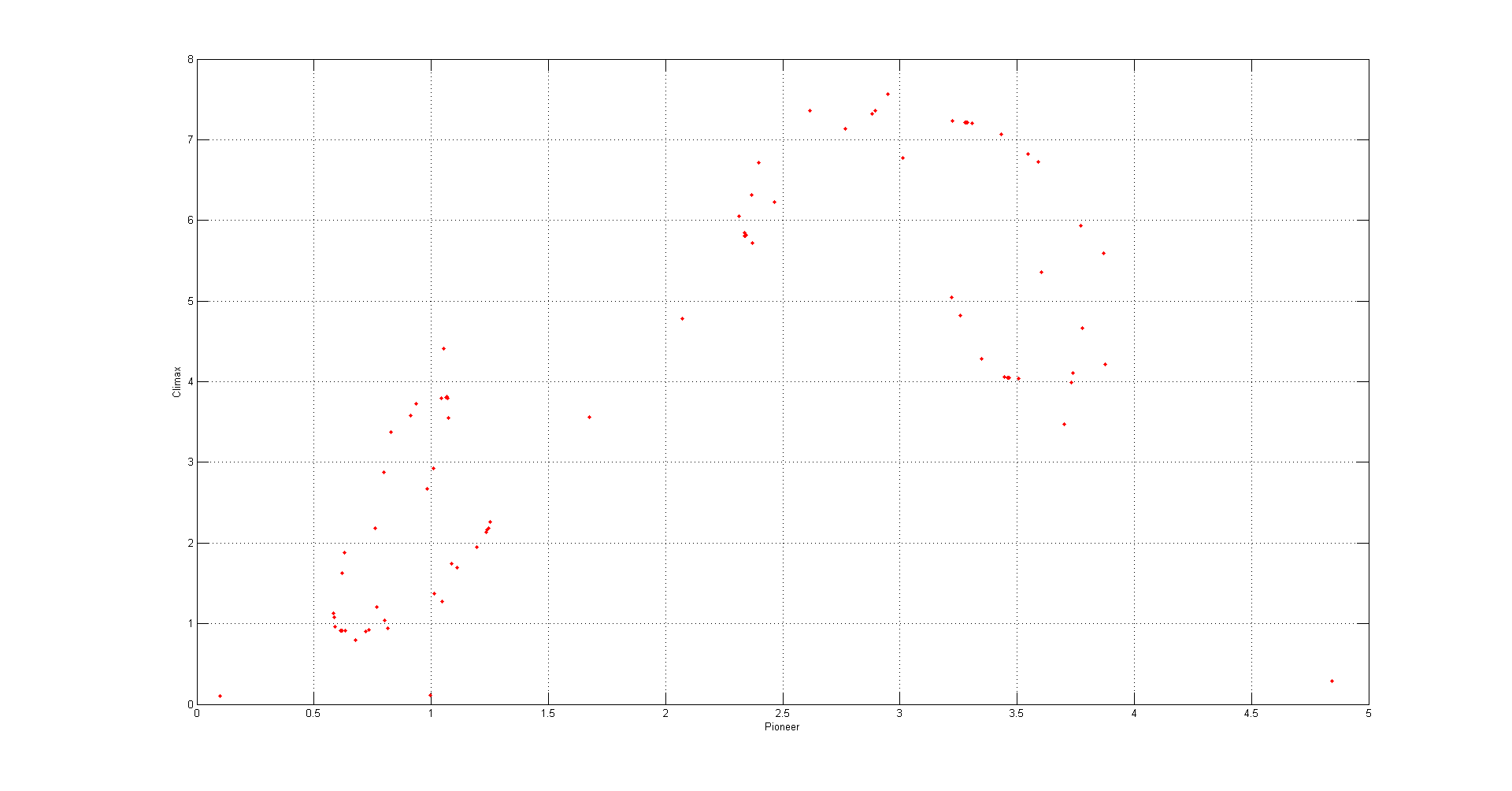}
\caption{Attractor of transitional multihorseshoe type for the map (\ref{e11})
with $a=2.4,b=2.5$.}%
\label{fig:MH1}%
\end{figure}

\begin{figure}[ptb]
\hspace{-0.4in}\includegraphics[width=1.1\textwidth]{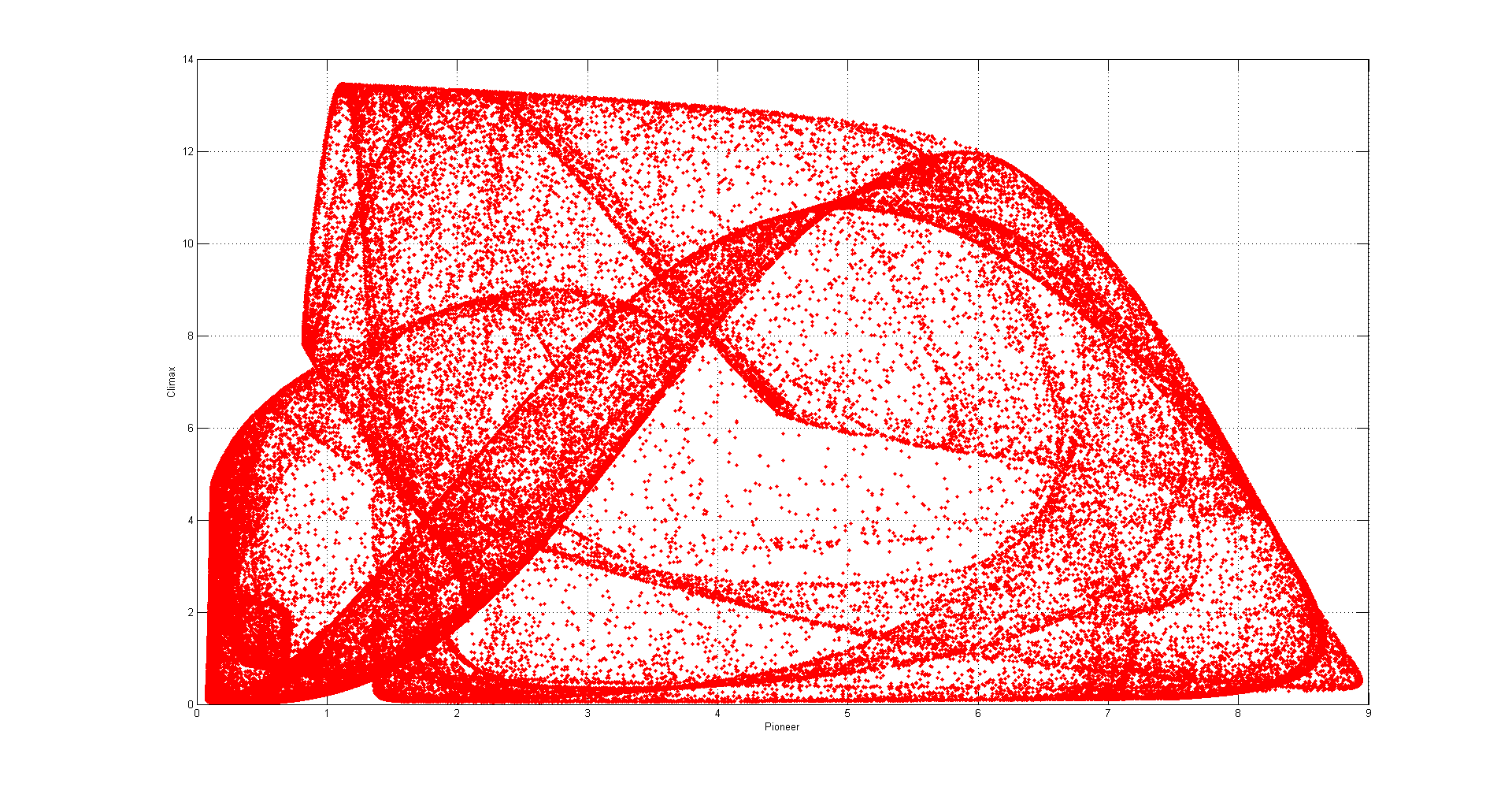}
\caption{A multihorseshoe chaotic strange attractor for the map (\ref{e11})
with $a=b=3$.}%
\label{fig:MH2}%
\end{figure}

Another example of a multihorseshoe attractor, this time for the map defined
as
\begin{equation}
f(x_{1},x_{2})=f(x_{1},x_{2};a,b):=\left(  x_{1}e^{a-0.8x_{1}},x_{2}%
(0.2x_{1}+0.8x_{2})e^{b-0.2x_{1}-0.8x_{2}}\right)  .\label{e12}%
\end{equation}
is shown in Fig. 6 for $a=b=3.$

\begin{figure}[ptb]
\hspace{-0.4in}\includegraphics[width=1.1\textwidth]{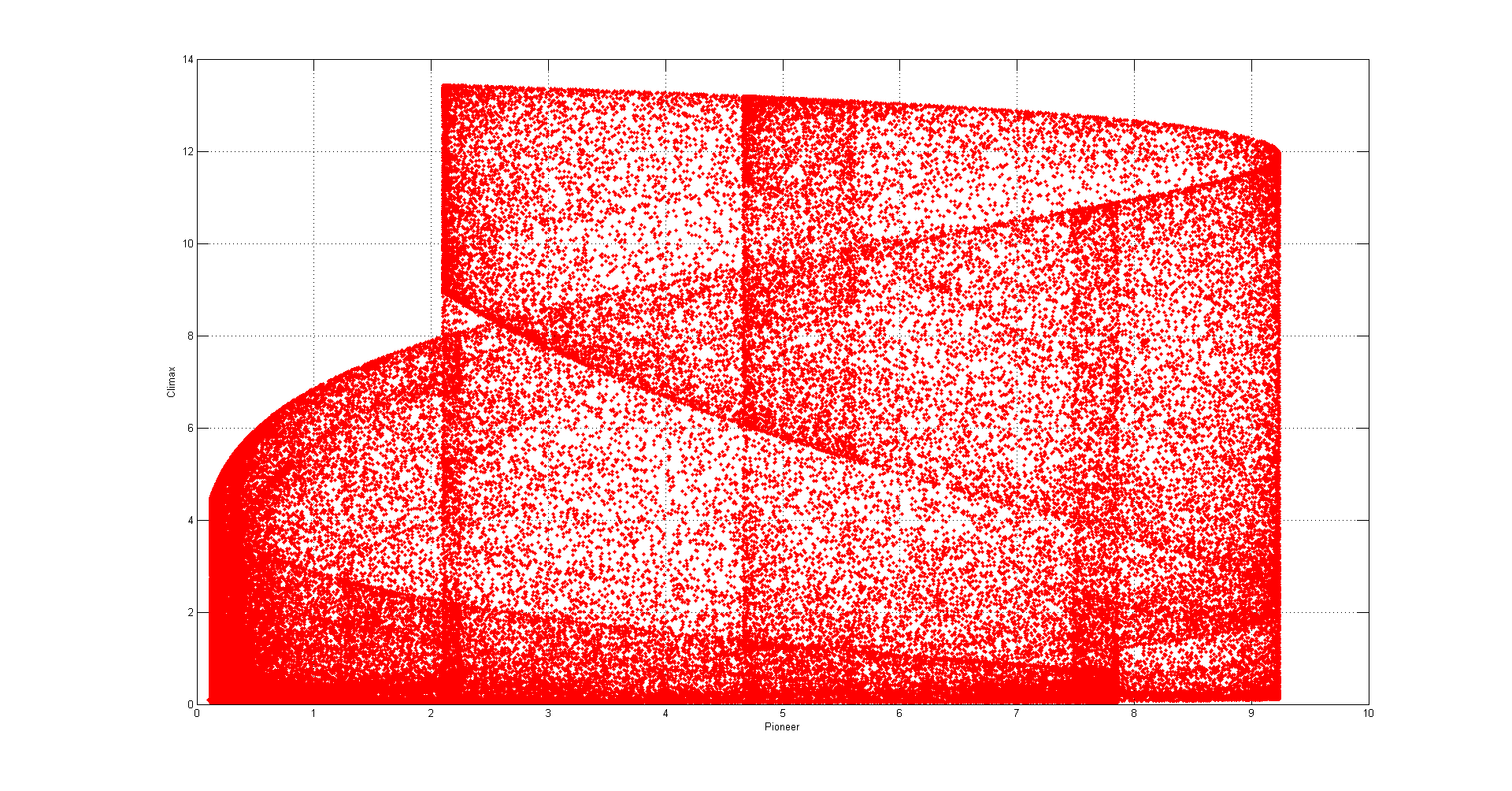}
\caption{A multihorseshoe chaotic strange attractor for the map (\ref{e12})
with $a=b=3$.}%
\label{fig:MH3}%
\end{figure}

\section{Concluding Remarks}

We have proved what appear to be several new theorems about strange attractors
based on hypotheses that are relatively easy to check, which means that they
are rather well suited to a variety of applications amenable to modeling by
discrete dynamical systems.

The radial theorems, which apply to asymptotically zero discrete dynamical
systems of arbitrary finite dimension, may be viewed as extensions of known
results for one-dimensional systems, since it appears that they pretty much
coincide with many aspects of much of the work that has appeared in the
literature in this case such as that described in \cite{BKNS,Thun,UW}. Our
multihorseshoe (trellis) results, even though they were at least partially
envisaged and analyzed by Easton, can still in large measure be considered as
novel inasmuch as they are rigorous extensions of the concepts introduced
in \cite{Easton}. As we also noted, the multihorseshoe attractors are
essentially of rank-one type, so it might be interesting to determine how
closely our approach is connected with rank-one theory. In addition, an
investigation of this connection might lead to some useful new techniques for
studying strange attractors that effectively combine elements of both methods.

One of the most attractive features of our multihorseshoe techniques is the
possibility they seem to provide for higher dimensional generalizations (with
may even extend to infinite-dimensional discrete dynamical systems). In
particular, it is natural to enquire if our approach can be used for the
construction and identification of chaotic strange attractors generated by
unstable manifolds of dimension greater that one. We have already begun to
look into this question, and expect to present rigorous versions of our
already promising preliminary results in a series of forthcoming papers. In
addition to extending and generalizing the results obtained here, we shall
continue to seek significant application areas in which our approach or some
modifications thereof can be used to answer outstanding questions concerning
the existence of strange attractors, which in physical modeling often provide
important information about the evolution of a system. For example, recently
developed dynamical models of granular flow phenomena such as in
\cite{BRTUZ,RBTUZ} appear to have strange attractors that control the
long-time behavior of the particle configurations.

\section*{Acknowledgment}

\noindent Y. Joshi would like to thank his department for support of his work
on this paper, and D. Blackmore is indebted to NSF Grant CMMI 1029809 for
partial support of his efforts in this collaboration.

\end{document}